\numberwithin{equation}{section}
\newcommand{\cU}{\mathcal{U}}
\newcommand{\N}{\mathbb{N}}
\newcommand{\R}{\mathbb{R}}
\newcommand{\Z}{\mathbb{Z}}
\newcommand{\qand}{\quad \textrm{and} \quad}
\newcommand\subsetsim{\mathrel{%
\ooalign{\raise0.2ex\hbox{$\subset$}\cr\hidewidth\raise-0.8ex\hbox{\scalebox{0.9}{$\sim$}}\hidewidth\cr}}}
\newcommand{\asdim}{\operatorname{asdim}}
\newcommand{\diam}{\operatorname{diam}}
\theoremstyle{thm}
\newtheorem{theorem}{Theorem}[section]
\newtheorem{proposition}[theorem]{Proposition}
\newtheorem{lemma}[theorem]{Lemma}
\theoremstyle{definition}
\newtheorem{definition}[theorem]{Definition}
\newtheorem{remark}[theorem]{Remark}
\newtheorem{example}[theorem]{Example}
\patchcmd{\section}{-.5em}{.5em}{}{}
\patchcmd{\subsubsection}{-.5em}{.5em}{}{}
\let\@wraptoccontribs\wraptoccontribs
\begin{document}

\title[Hurewicz formula for asdim-lowering symm.\ quasimorphisms of approx.\ groups]{A Hurewicz-type formula for asymptotic-dimension-lowering symmetric quasimorphisms of countable approximate groups}

\author{Vera Toni\'c}
\address{Faculty of Mathematics, University of Rijeka, Croatia}
\curraddr{}
\email{vera.tonic@math.uniri.hr}
\thanks{This work was supported by the University of Rijeka project \emph{uniri-iskusni-prirod-23-66}.}

\keywords{asymptotic dimension, approximate group, quasimorphism}

\subjclass[2020]{Primary: 51F30, 20F69; Secondary: 20N99}


\begin{abstract} 
A well-known Hurewicz-type formula for asymptotic-dimension-lowering group homomorphisms, due to A.\ Dranishnikov and J.\ Smith, states that if $f:G\to H$ is a group homomorphism, then $\asdim G \leq \asdim H + \asdim\ \! (\ker f)$. In this paper we establish a similar formula for certain quasimorphisms of countable approximate groups: if $(\Xi, \Xi^\infty)$ and $(\Lambda, \Lambda^\infty)$ are countable approximate groups and if $f:(\Xi, \Xi^\infty)\to (\Lambda,\Lambda^\infty)$ is a symmetric unital quasimorphism, we show that $\asdim \Xi \leq \asdim \Lambda + \asdim\ \! (f^{-1}\!(D(f)))$, where $D(f)$ is the defect set of $f$.
\end{abstract}

\maketitle

\section{Introduction}

In classical dimension theory in topology, there is a well-known theorem stating that for a closed map $f:X\to Y$ between metric spaces it is true that $\dim X\leq \dim Y + \dim f$, where $\dim f$ is the supremum of dimensions of the fibers $f^{-1}(y)$, for all $y\in Y$. This theorem was proven independently by K.\ Morita in 1956 and by K.\ Nagami in 1957, but it is known as Hurewicz dimension-lowering mapping theorem, since the earliest version of it, for compact metric spaces, was proven by W.\ Hurewicz in 1927, and then extended to separable metric spaces by W.\ Hurewicz and H.\ Wallman in 1941 (according to \cite{Engelking}). 

This theorem has inspired analogous theorems developed in several directions, like the versions for asymptotic dimension of metric spaces and Lipschitz or coarsely Lipschitz maps, quoted here as Theorem \ref{AbstractHurewicz0}  due to G.\ Bell and A.\ Dra\-nishnikov (\cite{BellDran-Hurewicz}), and Theorem \ref{AbstractHurewicz}  due to N.\ Brodskiy, J.\ Dydak, M.\ Levin and A.\ Mitra (\cite{BDLM}). A version known as Hurewicz-type formula for asymptotic dimension and group homomorphisms, quoted here as Theorem \ref{Thm: Hurewicz DS}  due to A.\ Dra\-nishnikov and J.\ Smith (\cite{DranSmith}), stating that for a group homomorphism $f:G\to H$, it is true that $\asdim G \leq \asdim H +\asdim \ \!(\ker f)$.

In \cite{Hartnick-Tonic}, T.\ Hartnick and the author of this paper have proven a Hurewicz-type formula, listed here as Theorem \ref{Thm: H-T}, stating that for a global morphism of countable approximate groups $f:(\Xi, \Xi^\infty)\to (\Lambda,\Lambda^\infty)$, it is true that $\asdim \Xi\leq \asdim \Lambda + \asdim\ \! [\![ \ker f]\!]_c$. Our goal in this paper is to show that a similar formula works for a more general sort of function between countable approximate groups. Namely, we prove in Theorem \ref{Main-Hurewicz-quasimorphism} that if $f:(\Xi, \Xi^\infty)\to (\Lambda,\Lambda^\infty)$ is a symmetric unital \emph{quasimorphism} between countable approximate groups, then $\asdim \Xi\leq \asdim \Lambda + \asdim \ \!(f^{-1}(D(f)))$, where $D(f)$ is the \emph{defect set} of the quasimorphism $f$.

\medskip

We will define approximate groups, as well as global morphisms and quasimorphisms between them in Section 3 of this paper, where we will also define asymptotic dimension of countable approximate groups. Before going into Section 3, Section 2 will contain a short reminder about basic definitions of asymptotic dimension on metric spaces, coarse equivalences and coarsely Lipschitz maps.  In Section 4 we will list all versions of Hurewicz-type theorems that are relevant for us, finishing with the statement of our main theorem, Theorem \ref{Main-Hurewicz-quasimorphism}, which we will prove in the final section, Section 5.

\section{Coarsely Lipschitz maps, coarse equivalence and asymptotic dimension}

In a metric space $(X,d)$ we will use notation $B(x, r)$ for an open ball, and $\overline{B}(x, r)$ for a closed ball with center at the point $x$ and with radius $r>0$. If $A\subseteq X$ and $R>0$,  then $N_R(A)$ will refer to an open $R$-neighborhood of a set $A$ in $(X,d)$. We will use the words \emph{function} and \emph{map} interchangeably.
Our main theorem refers to asymptotic dimension and in the proof of it we use Theorem \ref{AbstractHurewicz}, which mentions coarsely Lipschitz\footnote{The terminology we are using for maps is from \cite{CdlH}, also used in \cite{CHT}.} maps. We will also need the notion of coarse equivalence of metric spaces, so let us now review all of these definitions.

\begin{definition}\label{def: coarsely - maps} 
Let $(X, d_X)$, $(Y, d_Y)$ be metric spaces and let $f: X \to Y$ be a map. Let $\Phi_-, \ \Phi_+: [0, \infty) \to [0, \infty)$ be non-decreasing functions with $\lim_{t \to \infty} \Phi_i(t) = \infty$, for $i=-,+$.
\begin{enumerate}[(i)]
\item If $d_Y(f(x), f(x')) \leq \Phi_+(d_X(x,x'))$ for all $x,x'\in X$, then $f$ is called \emph{coarsely Lipschitz}, and $\Phi_+$ is referred to as an \emph{upper control function} for $f$.

\item If $\Phi_-(d_X(x,x')) \leq d_Y(f(x), f(x')) \leq \Phi_+(d_X(x,x'))$ for all $x,x'\in X$, then $f$ is called a \emph{coarse embedding}.

\item If $f$ is a coarse embedding which is also \emph{coarsely surjective}, i.e., if there is an $L>0$ such that $N_L(f(X))=Y$, then $f$ is called a \emph{coarse equivalence}.
\end{enumerate}
\end{definition}

We say that two metric spaces are \emph{coarsely equivalent}, and write $(X,d_X)\overset{CE}\approx (Y, d_Y)$, if there is a map between them which is a coarse equivalence. Coarsely Lipschitz maps can be characterized without mentioning control functions (see \cite[Prop.\ 3.A.5]{CdlH}):
\begin{lemma} \label{characterization-CL}
A map $f:X\to Y$ between metric spaces is coarsely Lipschitz if and only if for each $t> 0$, there exists $s> 0$ such that, if $x, x' \in X$ satisfy $d_X(x, x')\leq t$, then $d_Y (f(x),f(x'))\leq s$. 
\end{lemma}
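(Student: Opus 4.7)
The plan is a direct two-direction argument that simply unpacks the definitions. For the forward direction, suppose $f$ is coarsely Lipschitz with upper control function $\Phi_+$. Given $t>0$, set $s:=\Phi_+(t)$. Then for any $x,x'\in X$ with $d_X(x,x')\leq t$, monotonicity of $\Phi_+$ gives
\[
d_Y(f(x),f(x'))\leq \Phi_+(d_X(x,x'))\leq \Phi_+(t)=s,
\]
which is what we need.

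For the backward direction, suppose that for every $t>0$ there is an $s(t)>0$ such that $d_X(x,x')\leq t$ forces $d_Y(f(x),f(x'))\leq s(t)$. The natural move is to construct the upper control function by taking the pointwise tightest possible bound, i.e.\ to define
\[
\Phi_+(t):=\sup\bigl\{d_Y(f(x),f(x')) : x,x'\in X,\ d_X(x,x')\leq t\bigr\}.
\]
The hypothesis guarantees $\Phi_+(t)\leq s(t)<\infty$ for every $t\geq 0$, and the supremum is non-decreasing in $t$ since enlarging $t$ enlarges the set over which we take the supremum. By construction one immediately gets $d_Y(f(x),f(x'))\leq \Phi_+(d_X(x,x'))$ for all $x,x'\in X$.

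The only mild subtlety is that Definition \ref{def: coarsely - maps} insists that the upper control function satisfy $\lim_{t\to\infty}\Phi_+(t)=\infty$, which the $\Phi_+$ built above may fail (for instance, if $f$ has bounded image then $\Phi_+$ is bounded). I would handle this by replacing $\Phi_+(t)$ with $\widetilde{\Phi}_+(t):=\Phi_+(t)+t$: this is still non-decreasing, still dominates $d_Y(f(x),f(x'))$, and now tends to $\infty$ as $t\to\infty$. So $\widetilde{\Phi}_+$ is an admissible upper control function and $f$ is coarsely Lipschitz in the sense of Definition \ref{def: coarsely - maps}(i).

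There is really no substantive obstacle here; the lemma is a pure unfolding of definitions, and the only bookkeeping issue is enforcing the divergence condition on $\Phi_+$, which the $\Phi_+(t)+t$ trick absorbs without affecting the upper bound.
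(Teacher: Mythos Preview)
Your argument is correct. The forward direction is immediate from monotonicity of $\Phi_+$ (with the trivial caveat that if $\Phi_+(t)=0$ one should take, say, $s=\Phi_+(t)+1$ to ensure $s>0$ as the statement requires). The backward direction is handled cleanly: building the supremum function and then adding $t$ to force divergence is exactly the right move, and it does not disturb the upper bound inequality.

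As for comparison with the paper: the paper does not actually prove this lemma. It is stated with a citation to \cite[Prop.~3.A.5]{CdlH} and no argument is given. Your write-up is therefore a self-contained proof of a fact the paper merely quotes, and it follows the standard route one finds in the literature (indeed essentially the argument in \cite{CdlH}).
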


Now let us give a definition of asymptotic dimension of a metric space, for which several other equivalent definitions can be found in \cite{BellDran1}.

\begin{definition}\label{def: asdim} Let $n \in \mathbb N_0$.
A metric space $(X,d)$ has \emph{asymptotic dimension} $\asdim X = n$ if this $n$ is the smallest number for which the following is true: for every $R >0$ there is a cover $\cU$ of $X$ such that:
\begin{enumerate}
\item  $\cU$ can be written as a union of $n+1$ collections $\mathcal U^{(0)}, \dots, \mathcal U^{(n)}$ of subsets of $X$, i.e., $\cU=\bigcup_{i=0}^n \mathcal U^{(i)}$, so that each of $\mathcal U^{(i)}$ is \emph{$R$-disjoint}, that is, whenever $U,V \in \mathcal U^{(i)}$ are such that $U\neq V$, then $d(U, V) \geq R$, and
\item $\cU$ is \emph{uniformly bounded}, i.e., there exists $D>0$ such that $\diam U \leq D$, for all $U \in \cU$.
\end{enumerate}
If there is no such $n\in \N_0$, we say that $\asdim X = \infty$.
\end{definition}

Among the properties of $\asdim$ (to be found in  \cite{BellDran1}), we particularly need the following one:
\begin{lemma}
If $(X,d_X)$ and $(Y, d_Y)$ are coarsely equivalent metric spaces, then $\asdim X =\asdim Y$, i.e., $\asdim$ is a coarse invariant.
\end{lemma}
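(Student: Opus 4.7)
The plan is to establish both inequalities $\asdim X \leq \asdim Y$ and $\asdim Y \leq \asdim X$ directly from Definitions \ref{def: coarsely - maps} and \ref{def: asdim}. Fix a coarse equivalence $f: X \to Y$ with non-decreasing control functions $\Phi_-, \Phi_+ : [0,\infty) \to [0,\infty)$ tending to infinity, and a coarse surjectivity constant $L > 0$ with $N_L(f(X)) = Y$. The cases where either asymptotic dimension is $\infty$ are handled implicitly by the same arguments, so I may assume both are finite.

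First I would show $\asdim X \leq \asdim Y$. Suppose $\asdim Y = n$. Given $R > 0$, pick $S > \Phi_+(R)$ and apply Definition \ref{def: asdim} to obtain an $S$-disjoint uniformly bounded cover $\cV = \bigcup_{i=0}^n \cV^{(i)}$ of $Y$ with uniform bound $D > 0$. Define $\cU^{(i)} := \{f^{-1}(V) : V \in \cV^{(i)}\}$; since $\cV$ covers $f(X) \subseteq Y$, the family $\cU := \bigcup_{i=0}^n \cU^{(i)}$ covers $X$. For $R$-disjointness of each $\cU^{(i)}$: if $V \neq V'$ lie in $\cV^{(i)}$ and $x \in f^{-1}(V), x' \in f^{-1}(V')$, the upper control gives $\Phi_+(d_X(x,x')) \geq d_Y(f(x), f(x')) \geq S > \Phi_+(R)$, which forces $d_X(x,x') > R$ by monotonicity of $\Phi_+$. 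For uniform boundedness, choose $T$ with $\Phi_-(T) > D$ (possible because $\Phi_-(t) \to \infty$); then for $x,x' \in f^{-1}(V)$ we have $\Phi_-(d_X(x,x')) \leq d_Y(f(x),f(x')) \leq D < \Phi_-(T)$, so $d_X(x,x') \leq T$ and $\diam f^{-1}(V) \leq T$.

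Next I would show $\asdim Y \leq \asdim X$. Suppose $\asdim X = n$, let $R > 0$ be given, and choose $R' > 0$ with $\Phi_-(R') > R + 2L$. Take an $R'$-disjoint uniformly bounded cover $\cU = \bigcup_{i=0}^n \cU^{(i)}$ of $X$ with uniform bound $D' > 0$, and define $\cV^{(i)} := \{N_L(f(U)) : U \in \cU^{(i)}\}$. By coarse surjectivity, $\cV := \bigcup_{i=0}^n \cV^{(i)}$ covers $Y$. Each $N_L(f(U))$ has diameter at most $2L + \Phi_+(D')$, giving uniform boundedness. For $R$-disjointness of each $\cV^{(i)}$, take $U \neq U' \in \cU^{(i)}$ and points $y \in N_L(f(U))$, $y' \in N_L(f(U'))$ with $u \in U, u' \in U'$ such that $d_Y(y,f(u)), d_Y(y',f(u')) < L$. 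Since $d_X(u,u') \geq d_X(U,U') \geq R'$, the lower control and the triangle inequality give $d_Y(y,y') \geq d_Y(f(u),f(u')) - 2L \geq \Phi_-(R') - 2L > R$.

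The only genuine subtlety is the bookkeeping between the control functions and the disjointness/boundedness constants: choosing $S > \Phi_+(R)$ in one direction and $R'$ with $\Phi_-(R') > R + 2L$ in the other, and verifying that such choices exist thanks to $\Phi_\pm$ being non-decreasing and tending to infinity. No individual step is conceptually difficult, and I do not expect a hard obstacle; the argument is essentially a direct pull-back / push-forward of covers through $f$.
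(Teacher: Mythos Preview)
Your proof is correct and is the standard direct argument: pull back covers along $f$ to get $\asdim X \leq \asdim Y$, and push forward covers (thickened by the coarse-surjectivity constant $L$) to get the reverse inequality. The bookkeeping with $\Phi_\pm$ is handled properly; in particular, the choices $S > \Phi_+(R)$, $T$ with $\Phi_-(T) > D$, and $R'$ with $\Phi_-(R') > R + 2L$ are all available because the control functions are non-decreasing and tend to infinity, and your monotonicity deductions (e.g.\ $\Phi_+(d_X(x,x')) > \Phi_+(R) \Rightarrow d_X(x,x') > R$) are valid.

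The paper, however, does not prove this lemma at all: it is stated as a known property of asymptotic dimension and simply referred to \cite{BellDran1}. So there is no ``paper's own proof'' to compare against; you have supplied a complete self-contained argument where the paper chose to cite the literature.
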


We will also need the notion of $\asdim$ being \emph{uniformly bounded on a collection of spaces} (\cite[Section 2]{BellDranOnAsdimOfGroups}):

\begin{definition} Let $\mathbb Y \coloneqq  \{Y_\alpha\}_{\alpha \in A}$ be a collection of metric spaces and let $n\in \N_0$. We say that the asymptotic dimension of $\mathbb Y$ is \emph{uniformly bounded by $n$}, and write $\asdim\ \!(\mathbb Y) \overset{u}{\leq} n$, if for any $R > 0$ there exists $D>0$ such that for every $\alpha \in A$ there is a cover $\cU_\alpha$ of $Y_\alpha$ which satisfies the two properties from Definition \ref{def: asdim} with respect to $R$ and $D$.
\end{definition}

\section{Approximate groups and quasimorphisms}
Before introducing the needed definitions, we need some technical facts.
Let us recall that a metric space is \emph{proper} if its closed balls are compact. 
In what follows, we will need to choose ``nice'' metrics on countable groups, so let us note that we always consider countable groups as discrete groups.

\begin{remark}\label{left-inv proper metric}
Recall that on a countable group (which need not be finitely generated), one can always choose a left-invariant proper metric so that this metric agrees with discrete topology on the group (see, for example, \cite{DranSmith}, Section 1).
Since this metric is proper, closed balls in it are compact and so they are finite sets, being in a discrete group. Therefore, in this setting, open balls with bounded radii are also finite sets.
\end{remark}

 Next we need to introduce some notation. 
 Let $A$ and $B$ be subsets of a group $(G,\cdot)$. Then $AB:=\{a b\ | \ a\in A, b\in B\}$,  so $A^2=AA=\{ab\ | \ a,b\in A\}$, and $A^k=A^{k-1}A$, for $k\in\N_{\geq 2}$. We will use $A^{-1}:=\{a^{-1}\ | \ a\in A\}$, and if $A=A^{-1}$, we say that $A$ is \emph{symmetric}.
We mark the identity element of the group $G$ by $e$ or $e_G$, and if $e\in A$, we say that $A$ is \emph{unital}. Also if $g\in G$, then 
$gA=g\cdot A:=\{g a\ | \ a\in A\}$, where we will make the operation sign visible if it helps better understand what is written.

\subsection{Approximate subgroups and approximate groups}
Now let us introduce the notion of an approximate subgroup of a group. The following definition is due to T.~Tao (\cite{Tao}):

\begin{definition}[Approximate subgroup of a group]\label{DefTao} Let $G$ be a group and let $k \in \N$. A subset $\Lambda \subset G$ is called 
a \emph{$k$-approximate subgroup} of $G$ if
\begin{enumerate}[({AG}1)]
\item $\Lambda = \Lambda^{-1}$ and $e \in \Lambda$, i.e., $\Lambda$ is symmetric and unital, and
\item there exists a finite subset $F \subset  G$ such that $\Lambda^2\subset \Lambda  F$ and $|F| = k$.
\end{enumerate}
We say that $\Lambda$ is an \emph{approximate subgroup} of $G$ if it is a $k$-approximate subgroup for some $k \in \N$. 
\end{definition}

We will be interested in countably infinite approximate subgroups, so as long as the set $F$ is finite, the number of its elements is not going to be important to us.
The idea behind introducing approximate subgroups is allowing for the result of the group operation between two elements of a subset to be outside of this subset, but still ``a finite set away''.
Since for an approximate subgroup $\Lambda$ of a group $G$ there is the smallest subgroup $\Lambda^\infty := \bigcup_{k \in \N} \Lambda^k$ of $G$ which contains $\Lambda$, this leads to the following definition:

\begin{definition}[Approximate group]\label{DefApGr}
If $\Lambda$ is an approximate subgroup of a group $G$, then the group $\Lambda^\infty = \bigcup_{k \in \N} \Lambda^k$, which is the smallest subgroup of $G$ containing $\Lambda$, is called the \emph{enveloping group} of $\Lambda$. The pair $(\Lambda, \Lambda^\infty)$ is called an \emph{approximate group} and the associated filtered group $(\Lambda^\infty, (\Lambda^k)_{k \in \N})$ is called a \emph{filtered approximate group}.
\end{definition}
We say that an approximate group $(\Lambda, \Lambda^\infty)$ is \emph{finite} if $\Lambda$ is  finite (but clearly $\Lambda^\infty$ need not be finite).
We say $(\Lambda, \Lambda^\infty)$ is \emph{countable} if $\Lambda$ is countable, which also implies that $\Lambda^\infty$ is countable.

Although the notation $(\Lambda,\Lambda^\infty)$ may look a bit cumbersome,  when we need to introduce a ``nice'' metric on $\Lambda$  in Definition \ref{def: canonical coarse class}, we will first do so on $\Lambda^\infty$, so we might as well mention $\Lambda^\infty$ next to $\Lambda$. On the other hand, when we define the asymptotic dimension of an approximate group (see Definition \ref{def: asdim-L}), we will write (just) $\asdim \Lambda$.

For a few detailed examples and some non-examples of approximate (sub)groups, we refer the reader to
\cite{CHT}, while a short list  of examples can be found in \cite{Hartnick-Tonic}.

\medskip

We will be interested in countable approximate groups, which we will consider with a left-invariant proper metric on their enveloping groups. Note that \cite[Prop. 1.1]{DranSmith} for countable groups gives us:
\begin{proposition} If $d$ and $d'$ are two left-invariant proper metrics on the same countable group $G$, then the identity map between 
$(G, d)$ and $(G, d')$ is a coarse equivalence. 
\end{proposition}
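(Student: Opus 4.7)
The plan is to reduce everything to the observation that both $d$ and $d'$ are proper and compatible with the discrete topology, so that closed balls around the identity are \emph{finite} sets (as noted in Remark \ref{left-inv proper metric}), and then to exploit left-invariance. More precisely, I would use Lemma \ref{characterization-CL} and show that $\id:(G,d)\to (G,d')$ and $\id:(G,d')\to (G,d)$ are both coarsely Lipschitz, and then upgrade this to a coarse equivalence.

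First, fix $t>0$ and consider the closed ball $\overline{B}_d(e,t)\subseteq G$. Since $d$ is proper and the topology induced by $d$ is discrete, this closed ball is compact and discrete, hence finite. Therefore the quantity
\[
s \;:=\; \max\{\, d'(e,x) \,:\, x\in \overline{B}_d(e,t)\,\}
\]
is well-defined and finite. For any $g,h\in G$ with $d(g,h)\leq t$, left-invariance of $d$ gives $g^{-1}h\in \overline{B}_d(e,t)$, so by left-invariance of $d'$, $d'(g,h)=d'(e,g^{-1}h)\leq s$. By Lemma \ref{characterization-CL}, $\id:(G,d)\to (G,d')$ is coarsely Lipschitz. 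Interchanging the roles of $d$ and $d'$, the same argument shows that $\id:(G,d')\to (G,d)$ is coarsely Lipschitz as well.

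Now I would assemble this into a coarse equivalence in the sense of Definition \ref{def: coarsely - maps}(iii). Coarse surjectivity is immediate, since $\id$ is actually surjective (take $L>0$ arbitrary). The upper control function $\Phi_+$ for $\id:(G,d)\to(G,d')$ has already been produced; we may assume $\Phi_+$ is non-decreasing and tends to $\infty$ (if not, replace it by $t\mapsto \Phi_+(t)+t$). It remains to exhibit a lower control function $\Phi_-$. Let $\Psi_+$ be a non-decreasing upper control function for $\id:(G,d')\to(G,d)$ tending to $\infty$, so that $d(g,h)\leq \Psi_+(d'(g,h))$ for all $g,h\in G$. Define
\[
\Phi_-(r) \;:=\; \inf\{\, b\geq 0 \,:\, \Psi_+(b)\geq r\,\},
\]
which is non-decreasing; and since $\Psi_+$ is finite-valued, for any $M>0$ one has $\Phi_-(r)>M$ as soon as $r>\Psi_+(M)$, so $\Phi_-(r)\to\infty$. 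By construction, $d(g,h)\leq \Psi_+(d'(g,h))$ forces $\Phi_-(d(g,h))\leq d'(g,h)$, giving the required lower bound.

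The only mildly subtle step is the passage from two-sided coarse Lipschitzness to the existence of a lower control function $\Phi_-$, but this is a routine manipulation of non-decreasing functions as above. The rest of the argument rests almost entirely on finiteness of closed balls, which is the conceptual heart of the proposition.
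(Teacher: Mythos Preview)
Your proof is correct. Note, however, that the paper does not actually prove this proposition: it is quoted from \cite[Prop.\ 1.1]{DranSmith} without argument, so there is no in-paper proof to compare against. Your approach---using properness plus discreteness to get finiteness of closed balls around $e$ (Remark \ref{left-inv proper metric}), then transporting to arbitrary pairs via left-invariance---is precisely the standard argument and is essentially what one finds in \cite{DranSmith}. The passage from ``$\id$ is coarsely Lipschitz in both directions'' to ``$\id$ is a coarse embedding'' via the generalized inverse $\Phi_-(r)=\inf\{b:\Psi_+(b)\geq r\}$ is also correct and is a well-known general fact; you could alternatively just cite that a bijection which is coarsely Lipschitz with coarsely Lipschitz inverse is automatically a coarse equivalence.
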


Moreover, by the first part of Lemma 3.1 of \cite{CHT}, we get:
\begin{lemma}\label{ExternalQIType} Let $G$ be a countable group, $A \subseteq G$ be a subset and $d$ and $d'$ be left-invariant proper metrics on $G$. Then the identity map from $(A, d|_{A \times A})$ to $(A, d'|_{A \times A})$ is a coarse equivalence. 
\end{lemma}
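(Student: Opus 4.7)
The plan is to reduce the statement to the preceding Proposition, which already establishes the case $A = G$. The key observation is that the coarse-embedding bounds pass to subspaces simply by restricting the defining inequality, and that coarse surjectivity of an identity map onto its domain is automatic.

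First I would apply the Proposition to obtain non-decreasing functions $\Phi_-, \Phi_+ : [0,\infty) \to [0,\infty)$ with $\lim_{t \to \infty} \Phi_\pm(t) = \infty$, witnessing that the identity map $\mathrm{id}: (G,d) \to (G,d')$ is a coarse embedding; that is,
\[
\Phi_-(d(x,x')) \leq d'(x,x') \leq \Phi_+(d(x,x'))
\]
holds for all $x, x' \in G$. (Coarse surjectivity of the ambient identity is automatic, but I only need the coarse-embedding part at this stage.)

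Next I would observe that the same pair of control functions automatically serves the restricted map $\mathrm{id}: (A, d|_{A \times A}) \to (A, d|_{A \times A}')$: the displayed inequality holds in particular for $x, x' \in A$, and the distances in $A$ are by definition the restricted distances in $G$. Hence the restricted identity is a coarse embedding in the sense of Definition \ref{def: coarsely - maps}(ii). For the coarse-surjectivity clause (iii), the image of $A$ under the identity is $A$ itself, so $N_L(\mathrm{id}(A)) = A$ in $(A, d'|_{A \times A})$ for any $L > 0$. All three conditions of Definition \ref{def: coarsely - maps} therefore hold, and the restricted identity is a coarse equivalence.

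I do not expect any real obstacle. The only point worth flagging is that the lemma imposes no hypothesis on $A$ beyond its being a subset of $G$ — in particular $A$ need not be a subgroup, an approximate subgroup, or even closed under inversion — so the algebraic structure is used exclusively through the Proposition; once the ambient control functions $\Phi_\pm$ are in hand, the remainder is just the formal fact that a coarse embedding restricts to a coarse embedding on any subset with its restricted metrics, combined with the triviality of coarse surjectivity for the identity map.
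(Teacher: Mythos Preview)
Your argument is correct: the ambient coarse equivalence from the preceding Proposition restricts to a coarse embedding on any subset with the induced metrics, and the identity is trivially coarsely surjective onto its own domain. The paper does not actually prove this lemma but simply cites it from \cite[Lemma~3.1]{CHT}; your proof supplies exactly the expected details.
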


This allows us to introduce:
\begin{definition}\label{def: canonical coarse class}
The \emph{canonical coarse class $[G]_c$} of a countable group $G$ is the coarse equivalence class
of the metric space $(G, d)$, where $d$ is some (hence any) left-invariant proper metric on $G$, that is, 
$[G]_c:=[(G, d)]_c=\{(X,d')\ |\ (X,d') \text{ is a metric space such that } (X,d')\overset{CE}{\approx} (G, d)\}.$
\medskip

Let $(\Lambda, \Lambda^\infty)$ be a countable approximate group. Then for any subset $A \subseteq \Lambda^\infty$, we define the \emph{cannonical coarse class of $A$} by
\[
[A]_c \coloneqq  [(A, d|_{A \times A})]_c = \{(X,d')\ |\ (X,d') \text{ is a metric space such that } (X,d')\overset{CE}{\approx} (A, d|_{A \times A})\},
\]
where $d$ is some (hence any) left-invariant proper metric on the countable group $\Lambda^\infty$.
In particular, this defines the canonical coarse class of $\Lambda$,
$[\Lambda]_c \coloneqq  [(\Lambda, d|_{\Lambda \times \Lambda})]_c$.
\end{definition}

Also note that the canonical coarse class of $\Lambda$ is independent of the ambient group used to define it, because
if $\Lambda$ is an approximate subgroup of a countable group $G$ and $d$ is a left-invariant proper metric on $G$, then $d|_{\Lambda^\infty \times \Lambda^\infty}$ is a left-invariant proper metric on $\Lambda^\infty$ (which is contained in $G$), and hence
$[\Lambda]_c =  [(\Lambda, d|_{\Lambda \times \Lambda})]_c.$
Moreover, since the restriction of a left-invariant proper metric on $\Lambda^\infty$ is still a proper metric on $\Lambda$, the class $[\Lambda]_c$ admits a representative which is a proper metric space. 

Let us note here that for any left-invariant proper metric $d$ on $\Lambda^\infty$, we will refer to the metric $d|_{\Lambda\times\Lambda}$ as a \emph{canonical metric} on $\Lambda$.

\medskip

Now we define the asymptotic dimension of a countable group, and of a countable approximate group, as follows:
\begin{definition}\label{def: asdim - group}\label{def: asdim-L}
For a countable group $G$, its asymptotic dimension is defined as  
\[\asdim G := \asdim \ \! (G, d),\] 
where $d$ is any left-invariant proper metric on $G$. We can also define $\asdim \ \! ([G]_c) := \asdim \ \! (G, d)$, so $\asdim G =\asdim \ \! (G, d)=\asdim \ \! ([G]_c)$.

For a countable approximate group $(\Lambda, \Lambda^\infty)$, its asymptotic dimension is defined as
\[\asdim \Lambda := \asdim \ \! (\Lambda, d|_{\Lambda \times \Lambda}),
\]
where $d$ is any left-invariant proper metric on $\Lambda^\infty$. We can also define $\asdim \ \! ([\Lambda]_c) := \asdim \ \! (\Lambda, d|_{\Lambda\times\Lambda})$.
More generally, if $A$ is any subset of $\Lambda^\infty$,  then the asymptotic dimension of $A$ is defined as $\asdim A := \asdim \ \! (A, d|_{A \times A})=\asdim\ \! [A]_c$.
\end{definition}

\subsection{Global morphisms and quasimorphisms of approximate groups}
Before giving the announced definitions, here are some basic notions we need.
If $G$ and $H$ are groups, $A$ is a symmetric subset of $G$ and $f:A\to H$ is a (set-theoretic) function, we say that $f$ is \emph{symmetric} if $f(a^{-1})=f(a)^{-1}$. If $A$ is unital, we say that $f:A\to H$ is \emph{unital} if $f(e_G)=e_H$. If for all $a_1,a_2\in A$ such that $a_1a_2\in A$ we have $f(a_1a_2)=f(a_1)f(a_2)$, we call such $f$ a \emph{partial homomorphism}.

We would now like to introduce functions between groups which, as S.\ Ulam suggested in \cite{Ulam}, do not satisfy a strict rule like $f(xy)=f(x)f(y)$, but instead satisfy such a rule ``approximately''. So instead of demanding, for $f:G\to H$, that $f(xy)=f(x)f(y)$, or, equivalently, $f(y)^{-1} f(x)^{-1} f(xy)=e_H$ for all $x,y\in G$, we loosen this requirement as follows:

\begin{definition}\label{DefQM} Let $G$ and $H$ be groups. A (set-theoretic) function $f: G \to H$ is called a \emph{quasimorphism} if its \emph{defect set}
\begin{equation}\label{LDefectSet}
D(f) \coloneqq  \{f(y)^{-1}f(x)^{-1}f(xy) \mid x, y \in G\}
\end{equation}
 is finite.
\end{definition}
\begin{remark} What we call a quasimorphism in Definition \ref{DefQM} should be called a \emph{left-quasi\-mor\-phism}, while $D(f)$ should be called a \emph{left-defect set}, and we should define a \emph{right-quasimorphism} by demanding that the \emph{right-defect set}
\begin{equation}\label{RDefectSet}
D^*(f) \coloneqq  \{f(x)f(y)f(xy)^{-1} \mid x, y \in G\}
\end{equation}
be finite. However, it was proved by N.~Heuer in \cite[Prop.\ 2.3]{Heuer1} that the two notions coincide.
\end{remark}

\begin{remark}
We are using the name quasimorphism in Definition \ref{DefQM} following \cite{CHT}. In other sources, like \cite{FujiKap} or \cite{Heuer1}, the same kind of function is called a \emph{quasihomomorphism}, while the name quasimorphism is reserved for this kind of function which has $\R$ or $\Z$ as its codomain.
\end{remark}

\begin{remark}[Properties of defect set]\label{Drho} 
Since $f(xy) = f(x)f(y)f(y)^{-1}f(x)^{-1}f(xy)$, the set $D(f)$ has the following properties:
\begin{equation}\label{Drho1}
f(xy) \in f(x)f(y)D(f) \qand f(x)f(y) \in f(xy)D(f)^{-1} \quad \text{ for all }x,y \in G.
\end{equation}
\end{remark}

For the particular case when $H$ is a countable group with a left-invariant proper metric $d$, we have, according to Proposition 2.47 in \cite{CHT}:
\begin{proposition}
If $G$ is a group and $H$ is a countable group with a left-invariant proper metric $d$, then a function $f:G\to H$ is a quasimorphism if and only if there exists a constant $C\geq 0$ such that 
\[
d(f(xy),f(x)f(y))\leq C, \ \ \text{ for all } x,y \in G.
\]
\end{proposition}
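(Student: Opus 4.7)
The plan is to exploit two facts: left-invariance of the metric $d$ on $H$, which converts the quantity $d(f(xy),f(x)f(y))$ into a distance from the identity, and properness of $d$ on the countable (hence discrete) group $H$, which by Remark \ref{left-inv proper metric} forces closed balls around $e_H$ to be finite.

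First I would rewrite the key quantity. Observe that $(f(x)f(y))^{-1}f(xy) = f(y)^{-1}f(x)^{-1}f(xy)$, which is an arbitrary element of the defect set $D(f)$ as $x,y$ range over $G$. By left-invariance of $d$,
\[
d(f(xy),f(x)f(y)) \;=\; d\bigl(e_H,\ (f(x)f(y))^{-1}f(xy)\bigr) \;=\; d\bigl(e_H,\ f(y)^{-1}f(x)^{-1}f(xy)\bigr).
\]
So the condition $d(f(xy),f(x)f(y))\leq C$ for all $x,y \in G$ is equivalent to $D(f) \subseteq \overline{B}(e_H,C)$.

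From here both implications are short. For the forward direction, assume $D(f)$ is finite and set $C \coloneqq \max_{z \in D(f)} d(e_H, z)$, which exists as the maximum of finitely many non-negative numbers. Then every element of $D(f)$ lies in $\overline{B}(e_H,C)$, and the displayed identity gives $d(f(xy),f(x)f(y))\leq C$ for all $x,y \in G$. For the converse, assume the constant $C$ exists. The displayed identity shows $D(f) \subseteq \overline{B}(e_H,C)$. Because $d$ is a left-invariant proper metric on the countable discrete group $H$, its closed balls are compact in the discrete topology, hence finite (this is exactly the content of Remark \ref{left-inv proper metric}). Therefore $D(f)$ is finite, so $f$ is a quasimorphism by Definition \ref{DefQM}.

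There is no real obstacle here; the only substantive ingredient beyond a one-line algebraic rewriting is the finiteness of closed balls in a countable discrete group equipped with a proper metric, which is already noted in the excerpt. The argument does not use any structural property of $G$, so the hypothesis that $G$ be a group (rather than, say, having any metric structure) is used only to make the defect set well-defined.
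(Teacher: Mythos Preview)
Your argument is correct. Note, however, that the paper does not include its own proof of this proposition: it is quoted from \cite[Proposition~2.47]{CHT} and stated without proof, so there is nothing to compare against beyond observing that your argument is the standard one. The only steps are the algebraic rewriting via left-invariance and the finiteness of closed balls from Remark~\ref{left-inv proper metric}, both of which you carry out correctly.
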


\begin{example}[Examples of quasimorphisms]\label{ex: QM}
Basic examples of quasimorphisms between groups are homomorphisms, as well as all functions $f:G\to H$ between groups that have finite image. As noted in \cite{Heuer1}, different quasimorphisms can be constructed as follows: if $H$ is a group containing an infinite cyclic subgroup $C$ and $\tau:\Z \to H$ is a homomorphism with $\tau(\Z)=C$, then for any quasimorphism $\phi: G\to \Z$, the composition $\tau\circ\phi:G\to H$ is a quasimorphism.

The so-called \emph{counting quasimorphisms} introduced by Brooks (\cite{Brooks}), from non-abelian free groups $F_r$ of rank $r$ to $\Z$, are examples of symmetric quasimorphisms, which, roughly speaking, assign to a reduced word from $F_r\setminus \{e\}$ the difference between the number of appearances of some smaller word in it minus the number of appearances of the inverse of this smaller word (see, for example, \cite[Example B.43]{CHT} for more details). More facts on real-valued quasimorphisms can be found in \cite[Appendix B.5]{CHT}.

More examples and properties of quasimorphisms with discrete groups as codomains can be found in \cite{FujiKap}, while \cite{BrandVerbitsky} covers a more general theory of so-called Ulam quasimorphisms.
\end{example}

We see in Example \ref{ex: QM} that there are symmetric quasimorphisms, and more about the importance of these can be found in Appendix B.$5$ of \cite{CHT}. If the codomain of a quasimorphism $f$ is $\Z$ or $\R$, then $f$ being symmetric implies that $f$ is also unital, since from $f(e)=f(e^{-1})=c$ and $f(e^{-1})=f(e)^{-1}=-c$ one gets $2c=0$, so $c=0$. But even if the codomain is not $\Z$ nor $\R$, we can still make a quasimorphism unital using the following proposition (\cite[Proposition 2.7]{Heuer1}):
\begin{proposition}
Let $f:G\to H$ be a quasimorphism. Then the map $\overline f:G\to H$ defined by $\overline f|_{G\setminus \{e_G\}} = f$ and $\overline f (e_G)=e_H$ is also a quasimorphism.
\end{proposition}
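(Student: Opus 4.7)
The plan is to show directly that the defect set $D(\overline{f})$ is finite by comparing $\overline{f}$ with $f$ and running a short case analysis according to whether any of $x$, $y$, $xy$ equals $e_G$. The guiding observation is that $\overline{f}$ and $f$ agree off $e_G$ and differ only there, where $\overline{f}(e_G) = e_H$ while $f(e_G) = c$ for some fixed $c \in H$. Hence the expression $\overline{f}(y)^{-1}\overline{f}(x)^{-1}\overline{f}(xy)$ coincides with $f(y)^{-1}f(x)^{-1}f(xy) \in D(f)$ whenever none of $x$, $y$, $xy$ is the identity.

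First I would dispose of the cases in which either $x = e_G$ or $y = e_G$. If $x = e_G$ then $xy = y$, and the expression becomes $\overline{f}(y)^{-1} \cdot e_H \cdot \overline{f}(y) = e_H$, regardless of whether $y = e_G$ or not; the case $y = e_G$ is symmetric, giving $e_H$ as well. These cases contribute only the single element $e_H$ to $D(\overline{f})$.

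The main (mildly subtle) case is $x \neq e_G$, $y = x^{-1}$, so that $y \neq e_G$ but $xy = e_G$. Here the expression evaluates to $f(x^{-1})^{-1} f(x)^{-1} \cdot e_H = f(x^{-1})^{-1} f(x)^{-1}$, and the issue is whether these values remain in a finite set as $x$ varies over $G \setminus \{e_G\}$. To control them I would apply the defect property of $f$ to the pair $(x, x^{-1})$: the element $f(x^{-1})^{-1} f(x)^{-1} f(e_G) = f(x^{-1})^{-1} f(x)^{-1} c$ lies in $D(f)$, so $f(x^{-1})^{-1} f(x)^{-1} \in D(f) \, c^{-1}$, which is a finite set. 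Combining the four cases yields $D(\overline{f}) \subseteq D(f) \cup \{e_H\} \cup D(f) \, c^{-1}$, which is finite, and hence $\overline{f}$ is a quasimorphism. No further machinery is required beyond the finiteness of $D(f)$ and one application of its defining property.
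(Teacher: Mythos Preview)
Your proof is correct. The case analysis is complete: the four cases (one of $x,y$ is $e_G$; neither is but $xy=e_G$; none of $x,y,xy$ is $e_G$) exhaust all possibilities, and in each you correctly identify the contribution to $D(\overline f)$, arriving at the finite containment $D(\overline f)\subseteq D(f)\cup\{e_H\}\cup D(f)\,c^{-1}$.

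Note, however, that the paper does not actually give its own proof of this proposition; it merely quotes it as \cite[Proposition 2.7]{Heuer1}. So there is no in-paper argument to compare yours against. Your direct verification is exactly the sort of elementary argument one would expect for this statement, and it matches the spirit of Heuer's original proof.
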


Now let us move on to global morphisms and quasimorphisms between approximate groups.

\begin{definition}
A \emph{global morphism} $f: (\Xi, \Xi^\infty) \to (\Lambda, \Lambda^\infty)$ between approximate groups is a group homomorphism $f: \Xi^\infty \to \Lambda^\infty$ which restricts to partial homomorphisms $f_k:=f|_{\Xi^k} :\Xi^{k}\to \Lambda^k$ for each $k\in \N$, that is, for all $\xi_1, \xi_2 \in \Xi^k$ which satisfy $\xi_1\xi_2 \in \Xi^k$ we have $f_k(\xi_1\xi_2) = f_k(\xi_1)f_k(\xi_2)$.
\end{definition}

Note that for a global morphism $f(\Xi)\subset \Lambda$ implies that $f(\Xi^k)=f(\Xi)^k\subset \Lambda^k$, for all $k \in \N_{\geq 2}$.

\begin{definition} \label{def: QM}
Let $(\Xi, \Xi^\infty)$ and $(\Lambda, \Lambda^\infty)$ be approximate groups.
 A function of pairs $f: (\Xi, \Xi^\infty) \to (\Lambda, \Lambda^\infty)$ is called a \emph{global quasimorphism} (or simply a \emph{quasimorphism}) if $f: \Xi^\infty \to \Lambda^\infty$ is a quasimorphism in the sense of Definition \ref{DefQM}.
\end{definition}

Clearly a global morphism of approximate groups is an example of a symmetric and unital quasimorphism between them.

From Definition \ref{def: QM} we see that a
 quasimorphism $f$ of approximate groups satisfies $f(\Xi)\subset\Lambda$, and we use notation $f_1:=f|_\Xi:\Xi \to \Lambda$, as we do for global morphisms.
But $f(\Xi^2)$ need not be contained in $\Lambda^2$, since $f(\xi_1\xi_2)$ is not equal to $f(\xi_1)f(\xi_2)$ in general. However, since $D(f)$ is finite, from \eqref{Drho1} we get that $f(\Xi^2)\subset \Lambda^M$, for some $M \in \N_{\geq 2}$.
 Let us also note that, if we wish for $f(\Xi)$ to be equal to $\Lambda$, we will need this quasimorphism $f$ to be symmetric and unital:

\begin{proposition}
\label{ImagesExist} Let $f: G \to H$ be a quasimorphism between groups. If $\Xi$ is an approximate subgroup of $G$ and $f$ is symmetric and unital, then $f(\Xi) $ is an approximate subgroup of $H$.
\end{proposition}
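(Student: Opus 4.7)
The plan is to verify conditions (AG1) and (AG2) of Definition \ref{DefTao} for $f(\Xi) \subset H$.

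For (AG1), I would argue symmetry and the unital condition directly. Symmetry of $f(\Xi)$ follows immediately: given $a = f(\xi) \in f(\Xi)$ with $\xi \in \Xi$, we have $\xi^{-1} \in \Xi$ (since $\Xi$ is symmetric) and $f(\xi^{-1}) = f(\xi)^{-1} = a^{-1}$ (since $f$ is symmetric), so $a^{-1} \in f(\Xi)$. Since $e_G \in \Xi$ and $f$ is unital, $e_H = f(e_G) \in f(\Xi)$.

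The main work is verifying (AG2), i.e., producing a finite $F' \subset H$ with $f(\Xi)^2 \subset f(\Xi) F'$. Since $\Xi$ is an approximate subgroup, there is a finite $F \subset G$ with $\Xi^2 \subset \Xi F$. I would then take an arbitrary product $f(\xi_1) f(\xi_2) \in f(\Xi)^2$ and use the defect set properties from Remark \ref{Drho} (i.e., equation \eqref{Drho1}) twice. First, $f(\xi_1) f(\xi_2) \in f(\xi_1 \xi_2) D(f)^{-1}$. Next, write $\xi_1\xi_2 = \eta g$ for some $\eta \in \Xi$ and $g \in F$; then $f(\xi_1\xi_2) = f(\eta g) \in f(\eta) f(g) D(f) \subset f(\Xi) \cdot f(F) \cdot D(f)$. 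Combining these inclusions yields
\[
f(\xi_1) f(\xi_2) \in f(\Xi) \cdot f(F) \cdot D(f) \cdot D(f)^{-1}.
\]
Setting $F' := f(F) \cdot D(f) \cdot D(f)^{-1} \subset H$, which is finite because both $F$ and $D(f)$ are finite, we obtain $f(\Xi)^2 \subset f(\Xi) F'$, as required.

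The step I expect to require the most care is the double application of \eqref{Drho1}: one has to be mindful that the first use replaces a product $f(\xi_1)f(\xi_2)$ by a single $f$-value at a cost of $D(f)^{-1}$, while the second use goes the other direction, splitting $f(\eta g)$ at the cost of $D(f)$, and it is only the composition of these two moves together with the approximate-subgroup property $\Xi^2 \subset \Xi F$ that yields a containment of the form $f(\Xi) \cdot (\text{finite set})$. The symmetric and unital hypotheses are essential solely for (AG1); for (AG2) only the quasimorphism property of $f$ and the approximate-subgroup property of $\Xi$ are used.
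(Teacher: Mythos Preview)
Your proof is correct and follows essentially the same route as the paper's: verify (AG1) directly from the symmetric and unital hypotheses, then for (AG2) apply \eqref{Drho1} twice---once to pass from $f(\Xi)^2$ to $f(\Xi^2)D(f)^{-1}$, and once to pass from $f(\Xi F)$ to $f(\Xi)f(F)D(f)$---arriving at the same finite set $F' = f(F)D(f)D(f)^{-1}$. The only cosmetic difference is that the paper presents the argument via set-level inclusions while you argue element-wise.
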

\begin{proof} First note that the image of a symmetric unital set under a symmetric unital map is symmetric and unital, so $f(\Xi)$ is symmetric and unital.
Secondly, since there exists a finite set $F\subset G$ such that $\Xi^2\subset \Xi F$, 
by \eqref{Drho1} we have 
\[
f(\Xi^2)\subset f(\Xi F) \subset f(\Xi) f(F)D(f).
\]
This, together with another application of \eqref{Drho1}, yields
\[
f(\Xi)^2 \subset f(\Xi^2)D(f)^{-1} \subset f(\Xi F)D(f)^{-1} \subset f(\Xi) f(F)D(f)D(f)^{-1},
\]
and the set $f(F)D(f)D(f)^{-1}$ is finite.
\end{proof}

\section{The Hurewicz dimension-lowering mapping theorem and some existing generalizations}

The classical Hurewicz theorem for dimension-lowering maps (e.g.~\cite[Theorem 4.3.4]{Engelking}, due to Morita and Nagami), states:
\begin{theorem}[Hurewicz dimension-lowering theorem] Let $X$ and $Y$ be me\-tri\-zable spaces and let $f: X \to Y$ be a closed map. Then
\[
\dim X \leq \dim Y + \dim f, \quad \text{where} \quad \dim f \coloneqq  \sup\ \!\{ \dim(f^{-1}(y))\ | \  y\in Y\}.
\]
\end{theorem}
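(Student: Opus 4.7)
The plan is to prove this classical Morita--Nagami result by combining two cover-refinement steps: a fine cover of each fiber, whose order is controlled by $\dim f \leq m$, and a fine cover of the base $Y$, whose order is controlled by $\dim Y \leq n$. Closedness of $f$ will be the bridge between them.

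First I would fix a finite open cover $\mathcal{U}$ of $X$ and aim to produce a refinement of order at most $n+m+1$. For each $y \in Y$, since $\dim f^{-1}(y) \leq m$, the restriction of $\mathcal{U}$ to the fiber can be shrunk to an open cover of $f^{-1}(y)$ of order at most $m+1$; lifting each of its members to an open set of $X$ contained in an element of $\mathcal{U}$ yields a finite family $\{V_i^{(y)}\}$ of open subsets of $X$ whose union contains $f^{-1}(y)$ and whose trace on the fiber has order at most $m+1$. Because $f$ is closed, the set
\[
U_y := Y \setminus f\!\Bigl(X \setminus \bigcup_i V_i^{(y)}\Bigr)
\]
is an open neighborhood of $y$, and over $U_y$ the family $\{V_i^{(y)}\}$ still refines $\mathcal{U}$ with fiberwise order at most $m+1$.

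Next I would exploit $\dim Y \leq n$ to refine the cover $\{U_y\}_{y \in Y}$ to an open cover $\mathcal{W}$ of $Y$ of order at most $n+1$. Pulling back gives that $\{f^{-1}(W)\}_{W \in \mathcal{W}}$ covers $X$ with order at most $n+1$, while on each $f^{-1}(W)$ I already have a fiberwise cover of order at most $m+1$. A naive intersection of these two families produces a refinement of order at most $(n+1)(m+1)$, which is too large, since the target bound $n+m+1$ is additive rather than multiplicative.

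Turning this product bound into a sum is the main obstacle. The standard device is a Dowker--Kat\v{e}tov style coordinated shrinking: one chooses the lifts $V_i^{(y)}$ and the refinement $\mathcal{W}$ compatibly, so that no point of $X$ lies simultaneously in more than $n+1$ pullbacks $f^{-1}(W)$ and in more than $m+1$ of the $V_i^{(y)}$. This simultaneous control requires metrizability of both $X$ and $Y$ and is naturally organized by induction on $n$, which is where closedness of $f$ is used most essentially, ensuring that the local shrinkings on fibers patch together to genuinely open sets in $X$.
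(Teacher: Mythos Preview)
The paper does not contain a proof of this theorem; it is merely quoted as classical background, with a reference to Engelking's book \cite[Theorem 4.3.4]{Engelking}. So there is no in-paper argument to compare against.

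Assessing your proposal on its own merits, there are two genuine gaps.

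First, the sentence ``over $U_y$ the family $\{V_i^{(y)}\}$ still refines $\mathcal{U}$ with fiberwise order at most $m+1$'' is unjustified. You arranged the order bound only on the single fiber $f^{-1}(y)$; nothing prevents the open sets $V_i^{(y)}$ from overlapping badly on nearby fibers inside $f^{-1}(U_y)$. Closedness of $f$ gives you the open neighborhood $U_y$, but it does not propagate the order bound from one fiber to a tube. Standard proofs handle this by choosing the $V_i^{(y)}$ more carefully (e.g.\ via swellings or by working with closed shrinkings first), and this is not a detail one can wave away.

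Second, and more seriously, the passage from the multiplicative bound $(n+1)(m+1)$ to the additive bound $n+m+1$ is precisely the heart of the theorem, and your last paragraph does not carry it out. Invoking ``Dowker--Kat\v{e}tov style coordinated shrinking'' and ``induction on $n$'' names a strategy but supplies no mechanism: you have not said what the inductive hypothesis is, how the closed-map hypothesis survives restriction to the pieces produced in the inductive step, or how the two order constraints are made to interact additively rather than multiplicatively. In Engelking's treatment this step occupies several pages of nontrivial combinatorics; as written, your proposal stops exactly where the real argument begins.
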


A version of this theorem for asymptotic dimension, due to Bell and Dranishnikov 
(see \cite[Theorem 1]{BellDran-Hurewicz} or \cite[Theorem 29]{BellDran1}), states:
\begin{theorem}[Asymptotic Hurewicz mapping theorem, first version]\label{AbstractHurewicz0} Let $f: X \to Y$ be a  Lipschitz map from a geodesic metric space $X$ to a metric space $Y$.  If for every $r>0$ the collection
$\mathbb Y_r \coloneqq  \{f^{-1}(B(y, r))\}_{y \in Y}$ satisfies 
$\asdim\ \!(\mathbb Y_r)  \overset{u}{\leq} n$,
then $\asdim X \leq \asdim Y + n.$
\end{theorem}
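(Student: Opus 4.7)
The plan is, given $R>0$, to construct a uniformly bounded cover of $X$ that splits into $R$-disjoint color classes witnessing the bound $\asdim X\leq\asdim Y + n$. Set $m\coloneqq\asdim Y$, let $L\geq 1$ be a Lipschitz constant for $f$, and set the scale $S\coloneqq LR$.

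First I would use $\asdim Y\leq m$ to produce a uniformly bounded cover $\cV=\bigcup_{i=0}^{m}\cV^{(i)}$ of $Y$, with diameter bound $T$, in which each color class $\cV^{(i)}$ is $S$-disjoint. For each $V\in\cV$, pick $y_V\in V$, so that $V\subseteq\overline{B}(y_V,T)$ and hence $f^{-1}(V)\subseteq f^{-1}(\overline{B}(y_V,T))\in\mathbb{Y}_{T+1}$. Invoking the uniform fiber hypothesis $\asdim\mathbb{Y}_{T+1}\overset{u}{\leq}n$ yields a single constant $D>0$ and covers $\cU_V=\bigcup_{j=0}^{n}\cU_V^{(j)}$ of $f^{-1}(\overline{B}(y_V,T))$ whose members have diameter at most $D$ and where each $\cU_V^{(j)}$ is $R$-disjoint.

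Next, I pull these covers back to $X$ and amalgamate color by color, setting
\[
\cW^{(i,j)}\coloneqq\bigl\{\,U\cap f^{-1}(V)\ :\ V\in\cV^{(i)},\ U\in\cU_V^{(j)}\,\bigr\}.
\]
Each $\cW^{(i,j)}$ has diameter at most $D$, and it is $R$-disjoint: within a single fiber this is the $R$-disjointness of $\cU_V^{(j)}$, while across distinct $V,V'\in\cV^{(i)}$ the $L$-Lipschitz estimate combined with $S=LR$ forces $d_X(f^{-1}(V),f^{-1}(V'))\geq R$. The union $\bigcup_{i,j}\cW^{(i,j)}$ is therefore a uniformly bounded cover of $X$, which already gives the multiplicative bound $\asdim X\leq(m+1)(n+1)-1$.

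The main obstacle is to sharpen this to the additive bound $\asdim X\leq m+n$, and this is precisely where the geodesic assumption on $X$ is essential. The idea is to reorganize the construction so that, instead of being indexed independently over each $V$, the fiber covers share a common palette of $n+1$ labels that can be paired with the $m+1$ labels of $\cV$ in a product-like rather than independent way; this should yield a cover whose nerve has dimension $m+n$ rather than $(m+1)(n+1)-1$. Concretely I would work with slight enlargements of each $V$ in $Y$, choose the fiber covers coherently on the enlarged preimages, and use geodesic segments in $X$ to reconcile the fiber-color assignments across adjacent cells; the geodesic structure of $X$ is exactly what makes such chaining arguments available. I expect the technical heart of the proof to lie in verifying that after this reorganization the resulting $m+n+1$ color classes remain simultaneously $R$-disjoint and uniformly bounded, since both properties must survive the merging of labels across different enlarged preimages.
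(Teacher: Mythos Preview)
The paper does not give its own proof of this theorem: Theorem~\ref{AbstractHurewicz0} is quoted from Bell--Dranishnikov \cite{BellDran-Hurewicz} as a known result, with no argument supplied. So there is nothing in the paper to compare your attempt against.

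That said, your proposal has a genuine gap. Everything up to the multiplicative bound $\asdim X\leq (m+1)(n+1)-1$ is standard and essentially correct (modulo a harmless adjustment of constants so that the across-fiber $R$-disjointness is strict). But the passage from the multiplicative bound to the additive bound $\asdim X\leq m+n$ is the entire content of the theorem, and you do not carry it out. Your description of the plan is also off target: the color scheme $\cW^{(i,j)}$ you built \emph{is} already the product pairing of the $m+1$ base colors with the $n+1$ fiber colors, and that is exactly what produces $(m+1)(n+1)$ classes; saying you will ``pair them in a product-like rather than independent way'' does not describe a mechanism for reducing to $m+n+1$ colors. Likewise, ``use geodesic segments in $X$ to reconcile the fiber-color assignments across adjacent cells'' is not a recognizable argument; nothing in your sketch explains how a geodesic would force two differently-labelled fiber pieces to merge or to become $R$-separated.

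For orientation: the Bell--Dranishnikov proof does not proceed by trying to merge color classes at all. It passes through an alternative characterization of asymptotic dimension (for geodesic spaces) via $\epsilon$-Lipschitz maps to uniform simplicial complexes, and obtains the additive bound by composing such maps and controlling the dimension of the target complex; geodesicity is used to produce the Lipschitz partitions of unity, not to ``chain'' covers. The later Brodskiy--Dydak--Levin--Mitra version (Theorem~\ref{AbstractHurewicz} here) removes the geodesic hypothesis by a different route. If you want a direct cover argument, you would need a genuinely new combinatorial idea beyond intersecting the two color schemes.
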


A generalization of this result, due to  Brodskiy, Dydak, Levin and Mitra (\cite[Theorem 1.2]{BDLM}), states:
\begin{theorem}[Asymptotic Hurewicz mapping theorem, second version]\label{AbstractHurewicz} Let $h: X \to Y$ be a coarsely Lipschitz map between metric spaces.  If for every $r>0$ the collection
$\mathbb Y_r \coloneqq  \{h^{-1}(B(y, r))\}_{y \in Y}$ satisfies
$\asdim\ \!(\mathbb Y_r)  \overset{u}{\leq} n$,
then $\asdim X \leq \asdim Y + n.$
\end{theorem}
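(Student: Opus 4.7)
The plan is to prove $\asdim X \leq m + n$, where $m \coloneqq \asdim Y$, by producing, for every prescribed $R > 0$, a uniformly bounded cover of $X$ that decomposes into $m + n + 1$ $R$-disjoint subfamilies, as demanded by Definition \ref{def: asdim}. The construction has three ingredients: lift a nicely colored cover of $Y$ to $X$, refine each lifted piece using the uniform fiber hypothesis, and assemble the resulting coloring.

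First I fix $R > 0$ and apply Lemma \ref{characterization-CL} to the coarsely Lipschitz map $h$ to obtain $S > 0$ such that $d_X(x, x') \leq R$ implies $d_Y(h(x), h(x')) \leq S$. Using $\asdim Y = m$, I select a cover $\mathcal{V} = \bigcup_{i=0}^{m} \mathcal{V}^{(i)}$ of $Y$ with each color $\mathcal{V}^{(i)}$ being $r$-disjoint for some $r$ chosen much larger than $S$, and with $\mathcal{V}$ uniformly bounded by $T = T(r)$. For each $V \in \mathcal{V}$, pick $y_V \in V$, so that $V \subseteq B(y_V, T)$ and hence $h^{-1}(V) \subseteq h^{-1}(B(y_V, T))$. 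A contrapositive argument then shows that for any distinct $V, V' \in \mathcal{V}^{(i)}$, the preimages $h^{-1}(V)$ and $h^{-1}(V')$ are $R$-separated in $X$: if $x \in h^{-1}(V)$ and $x' \in h^{-1}(V')$ satisfied $d_X(x, x') \leq R$, then $d_Y(h(x), h(x')) \leq S$, hence $d_Y(V, V') \leq S < r$, contradicting $r$-disjointness.

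Next I invoke the hypothesis $\asdim(\mathbb{Y}_T) \overset{u}{\leq} n$ with scale parameter $R$: there is a uniform constant $D > 0$ such that each $h^{-1}(B(y, T))$ admits a cover that decomposes into $n+1$ $R$-disjoint classes of diameter at most $D$. Restricting to $h^{-1}(V)$ yields, for every $V$, a cover $\mathcal{W}_V = \bigcup_{j=0}^{n} \mathcal{W}_V^{(j)}$ with the same parameters. Assembling naively via the product coloring $\mathcal{U}^{(i,j)} \coloneqq \bigcup_{V \in \mathcal{V}^{(i)}} \mathcal{W}_V^{(j)}$ produces an $R$-disjoint, uniformly bounded cover of $X$ into $(m+1)(n+1)$ color classes (the $R$-disjointness within each class follows from the previous paragraph combined with the $R$-disjointness of $\mathcal{W}_V^{(j)}$ inside each fixed $V$). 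This already yields $\asdim X \leq (m+1)(n+1) - 1$, but it is too weak for the claimed bound.

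The main technical step, and the place where the real work lies, is reducing the number of colors from $(m+1)(n+1)$ down to $m + n + 1$. This is the heart of the Brodskiy--Dydak--Levin--Mitra argument. The strategy is to force the outer separation scale $r$ to be enormous compared with $R$, $T$, and $D$ (say, $r \geq 2(R + D + T)$), and then to collapse the grid $\{(i,j) : 0 \leq i \leq m,\ 0 \leq j \leq n\}$ onto a single index $k \in \{0, \ldots, m+n\}$ by a carefully chosen recoloring; the huge value of $r$ absorbs the ``cross'' distances introduced when pieces from different rows and columns of the grid are merged into the same new color, so that $R$-disjointness is preserved. Equivalently, one can repackage this as an application of a layered Bell--Dranishnikov-type union theorem to the decomposition $X = \bigcup_{i=0}^{m} \bigl(\bigcup_{V \in \mathcal{V}^{(i)}} h^{-1}(V)\bigr)$, where the index $i$ contributes $m$ and the uniform fiber hypothesis contributes $n$ to the final bound. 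Verifying the quantitative interplay between $r$, $T$, $D$, and $R$ that powers either route is the technical core of the proof; once carried out, the resulting cover witnesses $\asdim X \leq m + n$, as required.
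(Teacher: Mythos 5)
This theorem is quoted in the paper from \cite{BDLM} without proof, so there is no internal argument to compare against; judging your attempt on its own terms, it has a genuine gap at exactly the point you flag as ``the heart of the argument.'' Everything up to the product coloring is fine: the lifted families $\{h^{-1}(V)\}_{V\in\mathcal V^{(i)}}$ are $R$-separated within each color of $Y$, the fiber hypothesis refines each $h^{-1}(V)$ into $n+1$ $R$-disjoint uniformly bounded classes, and the assembled $(m+1)(n+1)$-coloring legitimately gives $\asdim X\leq mn+m+n$. But the reduction from $(m+1)(n+1)$ colors to $m+n+1$ colors is asserted, not proved, and the mechanism you propose for it does not work. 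Taking $r$ enormous separates preimages of distinct members of the \emph{same} color class $\mathcal V^{(i)}$; it does nothing to separate $h^{-1}(V)$ from $h^{-1}(V')$ when $V\in\mathcal V^{(i)}$ and $V'\in\mathcal V^{(i')}$ with $i\neq i'$, since such $V$ and $V'$ may overlap no matter how large $r$ is. Consequently, if you merge cells $(i,j)$ and $(i',j')$ of the grid with $i\neq i'$ and $j\neq j'$ into one new color (as any surjection of the grid onto $\{0,\dots,m+n\}$ must do once $m,n\geq 1$), a piece of $\mathcal W_V^{(j)}$ and a piece of $\mathcal W_{V'}^{(j')}$ can sit in overlapping preimages and be arbitrarily close; no choice of $r$ rescues $R$-disjointness. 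There is also a circularity in the proposed quantitative setup: you ask for $r\geq 2(R+D+T)$, but $T=T(r)$ and $D=D(R,T(r))$ both depend on $r$, so this inequality cannot simply be ``forced.''

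The missing ingredient is the saturation/union technique of Bell--Dranishnikov (or, in \cite{BDLM}, the machinery of $n$-dimensional control functions and Lipschitz maps to complexes): one does not recolor the static product grid, but instead builds the cover inductively across an increasing sequence of scales, enlarging (``saturating'') sets of one family by absorbing nearby sets of another so that each absorption step costs only one additional color rather than a multiplicative factor. Working out that induction, with the scales chosen in the right order so that the dependency $r\mapsto T\mapsto D$ is resolved rather than circular, is the actual content of the theorem; as written, your proposal establishes only the weaker bound $\asdim X\leq (m+1)(n+1)-1$.
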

The original statement of this theorem talks about a ``large scale uniform'' map, but this is precisely what is called a coarsely Lipschitz map in this paper in Definition \ref{def: coarsely - maps}. Also, instead of $\asdim X \leq \asdim Y + n$, in \cite[Theorem 1.2]{BDLM} it says  $\asdim X \leq \asdim Y +\asdim h$, where $\asdim h$ is defined  as $\sup \ \!\{\asdim A \mid A\subseteq X \text{ and } \asdim (h(A))=0\}$. However, the property $ \asdim(\mathbb Y_r)  \overset{u}{\leq} n$ can be shown to be equivalent to the map $h$ having an $n$-dimensional control function (terminology of \cite{BDLM}), and by Corollary 4.10 of \cite{BDLM}, this is equivalent to $\asdim h \leq n$.

\medskip

We follow with the Hurewicz-type formulas for homomorphisms of groups, and global morphisms of countable approximate groups. First of all, in (\cite[Theorem 2.3]{DranSmith}) Dranishnikov and Smith prove:
\begin{theorem}[Hurewicz-type formula for homomorphism of groups]\label{Thm: Hurewicz DS}
Let $f:G\to H$ be a homomorphism of groups. Then $\asdim G \leq \asdim H + \asdim\ \! (\ker f)$.
\end{theorem}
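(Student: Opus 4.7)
The plan is to deduce Theorem \ref{Thm: Hurewicz DS} from Theorem \ref{AbstractHurewicz}, the Brodskiy--Dydak--Levin--Mitra form of the asymptotic Hurewicz mapping theorem, by verifying its two hypotheses for the homomorphism $f:G\to H$. Fix left-invariant proper metrics $d_G$ on $G$ and $d_H$ on $H$ (which exist by Remark \ref{left-inv proper metric}), and endow $\ker f$ with the restricted metric $d_G|_{\ker f\times\ker f}$.

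First I would check that $f:(G,d_G)\to(H,d_H)$ is coarsely Lipschitz using the characterization in Lemma \ref{characterization-CL}: given $t>0$, the closed ball $\overline{B}(e_G,t)$ is finite by properness and discreteness, so $f(\overline{B}(e_G,t))$ is a finite subset of $H$ and therefore contained in some $\overline{B}(e_H,s)$. If $d_G(x,x')\leq t$, left-invariance of $d_G$ gives $x^{-1}x'\in\overline{B}(e_G,t)$, and left-invariance of $d_H$ together with $f$ being a homomorphism yields $d_H(f(x),f(x'))=d_H(e_H,f(x^{-1}x'))\leq s$. Next I would analyze the preimages of balls in $H$. Set $N_r\coloneqq |B(e_H,r)|$, which is finite and, by left-invariance of $d_H$, independent of the basepoint. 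Enumerating $B(y,r)\cap f(G)=\{y_1,\ldots,y_m\}$ with $m\leq N_r$, each $f^{-1}(y_i)$ is a left coset $x_i\cdot\ker f$, isometric to $(\ker f,d_G|_{\ker f\times\ker f})$ via left translation by $x_i^{-1}$. Thus $f^{-1}(B(y,r))$ is a disjoint union of at most $N_r$ subsets of $G$, each isometric to the single fixed metric space $\ker f$.

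The remaining step is to upgrade this structural observation to the \emph{uniform} bound $\asdim\{f^{-1}(B(y,r))\}_{y\in H}\overset{u}{\leq}\asdim(\ker f)$ required by Theorem \ref{AbstractHurewicz}. Given $R>0$, take a cover of $\ker f$ by $\asdim(\ker f)+1$ many $R$-disjoint families of diameter at most some $D=D(R)$, translate it to each of the at most $N_r$ cosets that comprise $f^{-1}(B(y,r))$, and then combine the resulting family-covers using a finite iteration of the Bell--Dranishnikov finite union theorem for asymptotic dimension. This produces a cover of $f^{-1}(B(y,r))$ by $\asdim(\ker f)+1$ many $R$-disjoint families whose diameter bound $D'=D'(R,D,N_r)$ depends only on the global constants, not on $y$. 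Theorem \ref{AbstractHurewicz} then delivers $\asdim G\leq\asdim H+\asdim(\ker f)$. I expect the main obstacle to be precisely this uniformity step: the $N_r$ cosets comprising a single preimage may lie at arbitrary mutual distances inside $G$, so the translated $R$-disjoint families on different cosets are in general not $R$-disjoint from one another, and one must iterate the finite-union procedure $N_r-1$ times to absorb the cross-coset interactions into the new diameter bound $D'$ while keeping $D'$ independent of $y$.
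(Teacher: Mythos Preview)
The paper does not supply its own proof of Theorem~\ref{Thm: Hurewicz DS}; it is quoted as \cite[Theorem~2.3]{DranSmith} and used as background. So there is no ``paper's proof'' to match against here. Your argument is correct in outline, and it is essentially the Dranishnikov--Smith strategy: verify that $f$ is coarsely Lipschitz and then feed the fiber structure into Theorem~\ref{AbstractHurewicz}.

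That said, you are working harder than necessary in the uniformity step, and the simpler route is precisely the one the paper exploits later in the proof of Theorem~\ref{Main-Hurewicz-quasimorphism}. You may first replace $H$ by $f(G)$ (a subspace, so $\asdim f(G)\le\asdim H$). Then for every $y\in f(G)$, choosing any $x_y\in f^{-1}(y)$ and using left-invariance together with the homomorphism property gives
\[
f^{-1}(B(y,r)) \;=\; x_y\cdot f^{-1}(B(e_H,r)),
\]
an \emph{isometric} copy of the single fixed set $f^{-1}(B(e_H,r))$. Hence every member of $\mathbb{Y}_r$ is isometric to one fixed metric space, and the uniform bound $\asdim(\mathbb{Y}_r)\overset{u}{\le}\asdim f^{-1}(B(e_H,r))$ is automatic --- no iterated finite-union argument is needed to control cross-coset interactions. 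Finally, writing $B(e_H,r)\cap f(G)=\{h_1,\dots,h_m\}$ and picking $g_i\in f^{-1}(h_i)$, one has $f^{-1}(B(e_H,r))=\bigcup_i g_i\ker f\subset N_R(\ker f)$ with $R=\max_i d_G(e_G,g_i)$, since $d_G(g_ik,k)=d_G(g_i,e_G)$ for all $k\in\ker f$. As $N_R(\ker f)$ is coarsely equivalent to $\ker f$, this yields $\asdim f^{-1}(B(e_H,r))\le\asdim(\ker f)$ without invoking the finite union theorem at all. Your route via iterated finite union is not wrong --- the quantitative constants in the Bell--Dranishnikov construction depend only on $R$, $D$, and the number of pieces, all of which are independent of $y$ --- but it obscures the fact that the family $\mathbb{Y}_r$ consists of mutually isometric sets, which is the real reason uniformity comes for free.
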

Then in \cite[Theorem 1.4]{Hartnick-Tonic}, relying on Theorem \ref{AbstractHurewicz}, it is proven:
\begin{theorem}[Hurewicz type formula for morphism of countable approximate groups]\label{Thm: H-T} Let $(\Xi, \Xi^\infty)$, $(\Lambda, \Lambda^\infty)$ be countable approximate groups and let $f: (\Xi, \Xi^\infty) \to (\Lambda, \Lambda^\infty)$ be a global morphism. Then
 \[
 \asdim\Xi \leq \asdim\Lambda + \asdim\ \!([\![\ker(f)]\!]_c),
 \]
where $[\![\ker(f)]\!]_c$ is the \emph{coarse kernel} of $f$. 
\end{theorem}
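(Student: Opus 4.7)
The plan is to invoke Theorem \ref{AbstractHurewicz} with $h \coloneqq f|_\Xi \colon \Xi \to \Lambda$, where both domain and codomain carry the canonical metrics inherited from fixed left-invariant proper metrics $d_\Xi$ on $\Xi^\infty$ and $d_\Lambda$ on $\Lambda^\infty$ (note $f(\Xi) \subseteq \Lambda$ by the discussion following Definition \ref{def: QM}). Setting $n \coloneqq \asdim\ \!(f^{-1}(D(f)))$---which may be assumed finite, as the conclusion is vacuous otherwise---Theorem \ref{AbstractHurewicz} will deliver $\asdim \Xi \leq \asdim \Lambda + n$ once we verify (a) $h$ is coarsely Lipschitz, and (b) for every $r > 0$ the collection $\{h^{-1}(B(y,r))\}_{y \in \Lambda}$ has $\asdim \overset{u}{\leq} n$.

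For (a), put $C \coloneqq \max_{g \in D(f)} d_\Lambda(e, g)$, which is finite since $D(f)$ is. Combined with \eqref{Drho1} and left-invariance, this gives $d_\Lambda(f(u)f(v), f(uv)) \leq C$ for all $u, v \in \Xi^\infty$. For $x, x' \in \Xi$ with $d_\Xi(x, x') \leq t$, set $y \coloneqq x^{-1} x' \in \Xi^2$; left-invariance gives $d_\Xi(e, y) \leq t$, and properness of $d_\Xi$ makes the closed $t$-ball around $e$ in $\Xi^\infty$ finite, so $M_t \coloneqq \max\{d_\Lambda(e, f(y)) : d_\Xi(e, y) \leq t\} < \infty$, and then
\[
d_\Lambda(f(x), f(x')) \leq d_\Lambda(f(x), f(x)f(y)) + d_\Lambda(f(x)f(y), f(xy)) \leq d_\Lambda(e, f(y)) + C \leq M_t + C,
\]
so $h$ is coarsely Lipschitz by Lemma \ref{characterization-CL}.

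For (b), fix $r > 0$ and $y \in \Lambda$ with $h^{-1}(B(y,r)) \neq \emptyset$; pick $x_0 \in h^{-1}(B(y,r))$. The left-translation $\ell_{x_0^{-1}} \colon u \mapsto x_0^{-1} u$ is an isometry of $\Xi^\infty$. Using symmetry of $f$ (so $f(x_0^{-1}) = f(x_0)^{-1}$) together with \eqref{Drho1}, every $x \in h^{-1}(B(y,r))$ satisfies
\[
f(x_0^{-1} x) \in f(x_0)^{-1} f(x)\, D(f) \subseteq B(y,r)^{-1} B(y,r)\, D(f) = B(e, r)^2 D(f) \subseteq B(e, 2r) D(f).
\]
Hence $\ell_{x_0^{-1}}(h^{-1}(B(y,r)))$ embeds isometrically into $T_r \coloneqq f^{-1}(B(e, 2r) D(f))$, a set independent of $y$. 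The finite set $B(e, 2r) D(f)$ decomposes $T_r = \bigcup_{z \in B(e, 2r) D(f)} f^{-1}(z)$ as a finite union; for each nonempty $f^{-1}(z)$ choose $v_z \in f^{-1}(z)$, so that for every $w \in f^{-1}(z)$ the same argument gives $f(v_z^{-1} w) \in z^{-1} z D(f) = D(f)$, hence $\ell_{v_z^{-1}}(f^{-1}(z)) \subseteq f^{-1}(D(f))$. Since left translations are isometries, $\asdim f^{-1}(z) \leq n$ for every $z$, and the finite-union formula for asymptotic dimension yields $\asdim T_r \leq n$. Given $R > 0$, fix a cover of $T_r$ by $n+1$ $R$-disjoint families of uniformly $D$-bounded sets; its restriction to the isometric copy of $h^{-1}(B(y,r))$ inside $T_r$ pulls back via $\ell_{x_0}$ to a cover of $h^{-1}(B(y,r))$ with the same parameters $R$ and $D$. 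Since $D$ depends only on $R$ and $r$, this verifies $\asdim\{h^{-1}(B(y,r))\}_{y \in \Lambda} \overset{u}{\leq} n$.

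Theorem \ref{AbstractHurewicz} then delivers $\asdim \Xi \leq \asdim \Lambda + n = \asdim \Lambda + \asdim\ \!(f^{-1}(D(f)))$, as required. The delicate step is (b): were $f$ a homomorphism, the fiber $h^{-1}(B(y,r))$ would translate directly to $h^{-1}(B(e, r))$, but for a genuine quasimorphism the translate spreads out by $D(f)$. What resolves this is the combination of symmetry of $f$, which collapses $f(x_0^{-1}) f(x) = f(x_0)^{-1} f(x)$ into the bounded region $B(y, r)^{-1} B(y, r) = B(e, r)^2$, together with finiteness of $D(f)$ and properness of $d_\Lambda$, which keeps $B(e, 2r) D(f)$ finite; the enlarged target $T_r$ is then a finite union of single-value fibers, each isometric to a subset of $f^{-1}(D(f))$, and the finite-union rule for $\asdim$ closes the argument.
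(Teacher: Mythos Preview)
Your argument is a correct proof---but of Theorem~\ref{Main-Hurewicz-quasimorphism}, not of Theorem~\ref{Thm: H-T}. You conclude with the bound $\asdim(f^{-1}(D(f)))$, and your entire discussion (the constant $C$, the spreading by $D(f)$, the closing paragraph) treats $f$ as a genuine quasimorphism with nontrivial defect. The stated theorem, however, concerns a global \emph{morphism} and asserts the sharper bound $\asdim([\![\ker(f)]\!]_c)=\asdim(\Xi^2\cap\ker f)$. Specializing your proof to a global morphism gives $D(f)=\{e_\Lambda\}$ and hence $f^{-1}(D(f))=\ker f$, the full kernel inside $\Xi^\infty$; but $\asdim(\ker f)$ can strictly exceed $\asdim(\Xi^2\cap\ker f)$. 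For instance, take $\Xi=\{-1,0,1\}\subset\Z=\Xi^\infty$ and let $f$ be the zero map to the trivial group: then $\ker f=\Z$ has $\asdim=1$, while $\Xi^k\cap\ker f$ is finite for every $k$ and has $\asdim=0$. So your inequality is correct but does not recover the statement as written.

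The paper itself does not prove Theorem~\ref{Thm: H-T}; it is quoted from \cite{Hartnick-Tonic}. What your argument is missing, in the morphism case, is bookkeeping of which power $\Xi^k$ the translated elements live in: since $x_0,x\in\Xi$ one has $x_0^{-1}x\in\Xi^2$, so the translated fiber actually lands in $\Xi^2\cap f^{-1}(B(e,2r))$; choosing the auxiliary points $v_z$ inside $\Xi^2$ then forces $v_z^{-1}w\in\Xi^4\cap\ker f$, and one finishes via the stabilization $[\Xi^2\cap\ker f]_c=[\Xi^4\cap\ker f]_c$ built into the definition of the coarse kernel. Without confining everything to a fixed $\Xi^k$, your proof reproduces only Theorem~\ref{Main-Hurewicz-quasimorphism}---and indeed it follows essentially the same route as the paper's proof of that theorem in Section~5 (the paper reaches the analogous endpoint by showing the relevant set sits inside $N_R(f^{-1}(D(f)))$, where you instead use the finite-union formula; these are equivalent).
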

The coarse kernel $[\![\ker(f)]\!]_c$ of a global morphism $f$ is defined in \cite[Remark 4.12]{Hartnick-Tonic} as 
\[
[\![\ker (f)]\!]_c:=
[\Xi^2\cap\ker f]_c = [\Xi^3\cap \ker f]_c = \dots = [\Xi^k\cap \ker f]_c = [\Xi^{k+1}\cap \ker f]_c=\ldots,
\]
where these equalities are shown to be true by \cite[Corollary 4.11]{Hartnick-Tonic}.

\medskip

As announced in the Introduction, in this paper in Section 5 we are going to prove a Hurewicz-type formula for certain kind of quasimorphisms of countable approximate groups, namely:

\begin{theorem}\label{Main-Hurewicz-quasimorphism}
Let $(\Xi, \Xi^\infty)$ and $(\Lambda,\Lambda^\infty)$ be countable approximate groups, and let $f:(\Xi, \Xi^\infty) \to (\Lambda,\Lambda^\infty)$ be a symmetric unital quasimorphism. Then 
\[ \asdim \Xi \leq \asdim \Lambda + \asdim\ \! (f^{-1}(D(f))),\]
where $D(f)$ is the defect set of $f$.
\end{theorem}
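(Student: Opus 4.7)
The plan is to apply the asymptotic Hurewicz mapping theorem (Theorem~\ref{AbstractHurewicz}) to the restriction $f_1 := f|_\Xi \colon \Xi \to \Lambda$, where $\Xi$ and $\Lambda$ carry the canonical metrics induced by fixed left-invariant proper metrics $d_\Xi, d_\Lambda$ on $\Xi^\infty$ and $\Lambda^\infty$, respectively. It suffices to verify (a) that $f_1$ is coarsely Lipschitz and (b) that for every $r > 0$ the family $\mathbb Y_r := \{(f_1)^{-1}(B(\lambda, r))\}_{\lambda \in \Lambda}$ satisfies $\asdim(\mathbb Y_r) \overset{u}{\leq} \asdim(f^{-1}(D(f)))$; Theorem~\ref{AbstractHurewicz} will then yield the desired inequality.

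For (a), suppose $d_\Xi(\xi, \xi') \leq t$ and set $y := \xi^{-1}\xi'$. Properness of $d_\Xi$ (Remark~\ref{left-inv proper metric}) forces $y$ to lie in the finite set $\overline{B}(e, t) \subseteq \Xi^\infty$. The quasimorphism identity \eqref{Drho1} applied to $(\xi, y)$ gives $f(\xi)^{-1}f(\xi') \in f(\overline{B}(e,t)) \cdot D(f) \subseteq \Lambda^\infty$, which is finite. Thus $d_\Lambda(f(\xi), f(\xi'))$ is uniformly bounded in terms of $t$, and Lemma~\ref{characterization-CL} delivers coarse Lipschitzness.

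For (b), fix $r > 0$ and $\lambda \in \Lambda$ and set $\mathcal P_\lambda := (f_1)^{-1}(B(\lambda, r))$. Assuming $\mathcal P_\lambda$ is nonempty, pick $\xi_0 \in \mathcal P_\lambda$. For any $\xi \in \mathcal P_\lambda$, apply the quasimorphism identity to the pair $(\xi_0^{-1}, \xi)$ and invoke the symmetry $f(\xi_0^{-1}) = f(\xi_0)^{-1}$ to obtain
\[
f(\xi_0^{-1}\xi) = f(\xi_0)^{-1} f(\xi) \cdot d \qquad \text{for some } d \in D(f).
\]
Since $f(\xi_0), f(\xi) \in B(\lambda, r) \cap \Lambda^\infty$, the prefix $f(\xi_0)^{-1}f(\xi)$ lies in $B^{-1}B$, where $B := B(e, r) \cap \Lambda^\infty$ is finite; hence $f(\xi_0^{-1}\xi) \in S_r := B^{-1}B \cdot D(f)$, a finite subset of $\Lambda^\infty$ independent of $\lambda$. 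By left-invariance of $d_\Xi$, $\mathcal P_\lambda$ is isometric to $\xi_0^{-1}\mathcal P_\lambda \subseteq f^{-1}(S_r)$. Now write $f^{-1}(S_r) = \bigcup_{s \in S_r} f^{-1}(s)$; a repetition of the same computation---applied to $(\xi_s^{-1}, \eta)$ for a basepoint $\xi_s \in f^{-1}(s)$ and arbitrary $\eta \in f^{-1}(s)$---yields $f(\xi_s^{-1}\eta) = f(\xi_s)^{-1}f(\eta)d' = s^{-1}s d' = d' \in D(f)$. Thus each $f^{-1}(s)$ is isometric (via left translation by $\xi_s^{-1}$) to a subset of $f^{-1}(D(f))$, and the finite union theorem for asymptotic dimension gives $\asdim(f^{-1}(S_r)) \leq \asdim(f^{-1}(D(f)))$. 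Since $S_r$ does not depend on $\lambda$, pulling back a suitable cover of $f^{-1}(S_r)$ along the isometry $\xi_0^{-1}$ transfers the bound uniformly to $\mathbb Y_r$.

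The main subtlety---and the step I expect to need the most care---is ensuring that the defect element $d$ above lies in $D(f)$ rather than in $D(f)^{-1}$. This is precisely why the quasimorphism identity is invoked on $(\xi_0^{-1}, \xi)$ with symmetry used to rewrite $f(\xi_0^{-1})$ as $f(\xi_0)^{-1}$; the more naive decomposition via $(\xi_0, \xi_0^{-1}\xi)$ produces a factor in $D(f)^{-1}$, and because $f^{-1}(D(f)^{-1})$ equals $(f^{-1}(D(f)))^{-1}$ while inversion is in general not a coarse equivalence of a left-invariant proper metric on a countable group, that route would give a strictly weaker bound. Hence the symmetric unital hypothesis on $f$ is essential to landing exactly at the set $f^{-1}(D(f))$ appearing in the statement.
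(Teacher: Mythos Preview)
Your proof is correct and follows essentially the same route as the paper's: apply Theorem~\ref{AbstractHurewicz} to $f_1$, use the quasimorphism identity together with symmetry to place each $\xi_0^{-1}\mathcal P_\lambda$ inside the $f$-preimage of a fixed finite subset of $\Lambda^\infty$, and then control the fibers of that preimage by $f^{-1}(D(f))$. The only cosmetic difference is that in the final step the paper right-multiplies each fiber $f^{-1}(s)$ by a fixed $\xi_s^{-1}$ to land it inside an $R$-neighbourhood $N_R(f^{-1}(D(f)))$, whereas you left-multiply to obtain isometric subsets of $f^{-1}(D(f))$ and then invoke the finite-union theorem; both variants yield the required uniform bound.
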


\section{Proof of the main theorem}
In the proof of Theorem \ref{Main-Hurewicz-quasimorphism} we intend to invoke Theorem \ref{AbstractHurewicz}, for which we need to make sure that $f_1=f|_\Xi:\Xi \to \Lambda$ is a coarsely Lipschitz map. 
The next lemma will show this. This statement is, in fact, true for all quasimorphisms between countable approximate groups (see \cite{CHT}, Lemma 3.8), but we will prove it only for symmetric quasimorphisms.
\begin{lemma}\label{coarsely-Lipschitz}
Let $(\Xi, \Xi^\infty)$ and $(\Lambda,\Lambda^\infty)$ be countable approximate groups, and let $f:(\Xi, \Xi^\infty) \to (\Lambda,\Lambda^\infty)$ be a symmetric quasimorphism. Then $f_1=f|_\Xi: \Xi \to \Lambda$ is coarsely Lipschitz, with respect to the canonical metrics on $\Xi$ and $\Lambda$.
\end{lemma}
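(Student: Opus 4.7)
The plan is to invoke Lemma \ref{characterization-CL}, which reduces coarsely Lipschitz-ness to the following: for each $t>0$, produce $s>0$ such that $d_\Xi(\xi,\xi')\leq t$ implies $d_\Lambda(f(\xi),f(\xi'))\leq s$, for all $\xi,\xi'\in\Xi$. The proof will combine three ingredients: left-invariance of the canonical metrics, properness (which by Remark \ref{left-inv proper metric} makes closed balls in the countable discrete group $\Xi^\infty$ finite), and the defect-set identity \eqref{Drho1}.

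Fix $t>0$ and $\xi,\xi'\in\Xi$ with $d_\Xi(\xi,\xi')\leq t$. Set $\eta:=\xi^{-1}\xi'\in\Xi^\infty$; by left-invariance, $d_\Xi(e,\eta)=d_\Xi(\xi,\xi')\leq t$, so $\eta$ lies in $B_t:=\overline{B}_{d_\Xi}(e,t)\cap\Xi^\infty$, a finite set that depends only on $t$. Applying the left-defect relation from Remark \ref{Drho} to the product $\xi\eta=\xi'$, there exists $\delta\in D(f)$ with $f(\xi')=f(\xi)f(\eta)\delta$. Left-invariance of $d_\Lambda$ and the triangle inequality then give
\[
d_\Lambda(f(\xi),f(\xi'))=d_\Lambda\bigl(e,f(\eta)\delta\bigr)\leq d_\Lambda\bigl(e,f(\eta)\bigr)+d_\Lambda(e,\delta).
\]

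I would finish by setting $s:=\max_{\eta\in B_t}d_\Lambda(e,f(\eta))+\max_{\delta\in D(f)}d_\Lambda(e,\delta)$. Both maxima are finite: the first because $B_t$ is finite, so $f(B_t)$ is a finite subset of $\Lambda^\infty$; the second because $D(f)$ is finite by the definition of a quasimorphism. The resulting $s$ depends only on $t$, so the criterion of Lemma \ref{characterization-CL} is met. There is no substantial obstacle here: the one genuine step is recognising that, after writing $\xi'=\xi\eta$ and invoking \eqref{Drho1}, a bound on $d_\Xi(\xi,\xi')$ is converted into a product $f(\eta)\delta$ whose $d_\Lambda$-size is controlled by finitely many pieces of model data indexed by $B_t\times D(f)$. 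Symmetry of $f$ does not actually enter this argument, though the hypothesis is used later in the proof of Theorem \ref{Main-Hurewicz-quasimorphism}.
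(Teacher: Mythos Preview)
Your proof is correct and uses the same ingredients as the paper's: Lemma~\ref{characterization-CL}, left-invariance of the metrics, finiteness of closed balls (Remark~\ref{left-inv proper metric}), and the defect-set relation~\eqref{Drho1}. The one organizational difference is that you write $f(\xi')=f(\xi)f(\eta)\delta$ and apply left-invariance directly to get $d_\Lambda(f(\xi),f(\xi'))=d_\Lambda(e,f(\eta)\delta)$, whereas the paper first rewrites $d'(f_1(\xi),f_1(\eta))=d'(f_1(\eta)^{-1}f_1(\xi),e_\Lambda)$ and then uses the symmetry hypothesis to replace $f_1(\eta)^{-1}$ by $f_1(\eta^{-1})$ before invoking~\eqref{Drho1}. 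As you correctly observe, your arrangement makes the symmetry assumption unnecessary; this is consistent with the paper's own remark, just before the lemma, that the statement in fact holds for all quasimorphisms (citing \cite{CHT}, Lemma~3.8).
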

\begin{proof}
First we fix a left-invariant proper metric $d$ on $\Xi^\infty$ and $d'$ on $\Lambda^\infty$. According to Lemma \ref{characterization-CL}, it suffices to show that, for any $t> 0$ there exists an $s> 0$ such that whenever $\xi, \eta \in \Xi$ satisfy $d(\xi, \eta)\leq t$, then $d'(f_1(\xi),f_1(\eta))\leq s$.

Let us take a random $t> 0$ and fix it. Since the metric $d$ is proper, the closed ball $\overline B(e_\Xi, t)=\{\xi \in \Xi^\infty \ | \ d(e_\Xi, \xi)\leq t\}$ is compact so it is finite, by Remark \ref{left-inv proper metric}. Thus $f(\overline B(e_\Xi, t))$ is a finite subset of $\Lambda^\infty$, so there exists an $S>0$ (depending on $t$) such that 
\begin{eqnarray}\label{finite balls}
f(\overline B(e_\Xi, t))\subset \overline{B'}(e_\Lambda, S):=\{\lambda \in \Lambda^\infty \ | \ d'(e_\Lambda, \lambda)\leq S\}.
\end{eqnarray}
Now let $\xi,\eta\in \Xi$ be any two elements with $d(\xi, \eta)\leq t$. Since $d'$ is left-invariant and $f$ is symmetric, it follows that
\begin{eqnarray}\label{triangle ineq}
d'(f_1(\xi), f_1(\eta)) &=& d'(f_1(\eta)^{-1}f_1(\xi), e_\Lambda)=d'(f_1(\eta^{-1})f_1(\xi), e_\Lambda)\nonumber \\
&\leq & d'(f_1(\eta^{-1})f_1(\xi), f(\eta^{-1}\xi) )+ d'(f(\eta^{-1}\xi), e_\Lambda).
\end{eqnarray}

Note that from $d(\xi,\eta)=d(\eta^{-1}\xi, e_\Xi)\leq t$ we have that $\eta^{-1}\xi$ is in $\overline B(e_\Xi, t)$, so by \eqref{finite balls} 
\begin{eqnarray}\label{basic}
d'(f(\eta^{-1}\xi), e_\Lambda)\leq S.
\end{eqnarray}

On the other hand, $d'(f_1(\eta^{-1})f_1(\xi), f(\eta^{-1}\xi) )=d'(f(\eta^{-1}\xi)^{-1}f_1(\eta^{-1})f_1(\xi), e_\Lambda)$. But by \eqref{Drho1} we have 
$f(\eta^{-1}\xi)^{-1}f_1(\eta^{-1})f_1(\xi)\in D(f)^{-1}$, and the defect set $D(f)$ is finite, so if we define
\[
C:=\max_{\lambda \in D(f)^{-1}} d'(\lambda, e_\Lambda),
\]
then 
\begin{eqnarray}\label{finish it off}
d'(f_1(\eta^{-1})f_1(\xi), f(\eta^{-1}\xi) )=d'(f(\eta^{-1}\xi)^{-1}f_1(\eta^{-1})f_1(\xi), e_\Lambda)\leq C.
\end{eqnarray}
Finally \eqref{triangle ineq}, \eqref{basic} and \eqref{finish it off} yield $d'(f_1(\xi),f_1(\eta))\leq C+S$, which finishes the proof.
\end{proof}

\medskip

Now we can prove the main theorem:
\begin{proof}[Proof of Theorem \ref{Main-Hurewicz-quasimorphism}]
By Proposition \ref{ImagesExist}, we may assume that $f|_\Xi=f_1:\Xi \to \Lambda$ is surjective, since we can replace $\Lambda$ with the approximate subgroup $f(\Xi)$, if needed. Also note that $e_\Lambda \in D(f)$, because $f$ is unital.

Let us fix left-invariant proper metrics $d$ on $\Xi^\infty$ and $d'$ on $\Lambda^\infty$. For an $r\in \R_{>0}$ we denote by $B(\xi, r)$ and $B'(\lambda, r)$ open balls with radius $r$ in $(\Xi^\infty,d)$ and $(\Lambda^\infty, d')$, respectively, centered at $\xi \in\Xi^\infty$, $\lambda\in \Lambda^\infty$. Note that by left-invariance of $d'$ we have $B'(\lambda,r)=\lambda B'(e_\Lambda,r)$, for all $\lambda \in \Lambda^\infty$.

Now by Lemma \ref{coarsely-Lipschitz}, the restriction $f_1: \Xi \to \Lambda$ is coarsely Lipschitz.
Because of Theorem \ref{AbstractHurewicz}, it suffices to show that for every $r>0$, the collection
\[
\mathbb Y_r \coloneqq  \{f_1^{-1}(B'(\lambda, r) \cap \Lambda)\}_{\lambda \in \Lambda}
\]
satisfies the inequality
\begin{equation}\label{HurewiczToShow1}
\asdim \ \! (\mathbb{Y}_r)  \overset{u}{\leq} {\asdim\ \! f^{-1}(D(f)) }.
\end{equation}

First, let us fix a random $r > 0$. Then, for every $\lambda \in \Lambda$ pick a $\xi_\lambda \in f_1^{-1}(\lambda)\subset\Xi$, so $f(\xi_\lambda)=f_1(\xi_\lambda)=\lambda$.
Furthermore, 
note that \[(\lambda B'(e_\Lambda,r))\cap\Lambda \subset \lambda \cdot\left(B'(e_\Lambda,r)\cap\Lambda^2\right),\] since if $z=\lambda b=\widetilde\lambda$, for some $b\in B'(e_\Lambda,r)$, $\widetilde\lambda \in\Lambda$, then $b=\lambda^{-1}\widetilde\lambda\in \Lambda^2$. Now, for any $\lambda\in\Lambda$,
\begin{eqnarray} \label{first long}
f_1^{-1}(B'(\lambda, r) \cap \Lambda) &=& f_1^{-1}((\lambda B'(e_\Lambda, r)) \cap \Lambda) \nonumber \\
 &\subset& f_1^{-1}(\lambda\cdot (B'(e_\Lambda, r)\cap \Lambda^2)) \nonumber \\
&=& f_1^{-1}\left( f_1\left( \xi_\lambda\right)\cdot (B'(e_\Lambda, r)\cap \Lambda^2)\right) \nonumber \\
&\subset&  f^{-1}\left(f(\xi_\lambda)\cdot\left(B'(e_\Lambda, r) \cap \Lambda^2\right)\right) \nonumber
\\
&=& \{z \in \Xi^\infty \mid f(z) \in  f(\xi_\lambda)\cdot(B'(e_\Lambda, r) \cap \Lambda^2)\} \nonumber \\
&=& \{z \in \Xi^\infty \mid f(\xi_\lambda)^{-1}f(z) \in B'(e_\Lambda, r) \cap \Lambda^2\} \nonumber\\
&=& \{z \in \Xi^\infty \mid f(\xi_\lambda^{-1})f(z) \in B'(e_\Lambda, r) \cap \Lambda^2\}.
\end{eqnarray}

By \eqref{Drho1} we know that $f(\xi_\lambda^{-1})f(z) \in f(\xi_\lambda^{-1} z) D(f)^{-1}$, so $f(\xi_\lambda^{-1})f(z) = f(\xi_\lambda^{-1} z)\cdot d^{-1}$ for some $d\in D(f)$. Thus 
\[
f(\xi_\lambda^{-1})f(z) \in B'(e_\Lambda, r) \cap \Lambda^2 \ \ \Rightarrow\ \  f(\xi_\lambda^{-1} z)\cdot d^{-1} \in B'(e_\Lambda, r) \cap \Lambda^2
\]
so 
\begin{eqnarray}\label{second long}
f(\xi_\lambda^{-1}z) \in \left(B'(e_\Lambda, r) \cap \Lambda^2\right) D(f) & \Rightarrow& \xi_\lambda^{-1}z \in f^{-1}\!\left(\left(B'(e_\Lambda, r) \cap \Lambda^2\right) D(f) \right) \nonumber \\
&\Rightarrow& z\in \xi_\lambda\cdot f^{-1}\!\left(\left(B'(e_\Lambda, r) \cap \Lambda^2\right) D(f) \right).
\end{eqnarray}

Therefore, from \eqref{first long} and \eqref{second long} we get
\begin{eqnarray}\label{up to iso}
f_1^{-1}(B'(\lambda, r) \cap \Lambda) &\subset&
\{z \in \Xi^\infty \mid f(\xi_\lambda^{-1})f(z) \in B'(e_\Lambda, r) \cap \Lambda^2\} \nonumber
\\
&\subset&  \xi_\lambda\cdot f^{-1}\!\left(\left(B'(e_\Lambda, r) \cap \Lambda^2\right) D(f) \right).
\end{eqnarray}
Since left-multiplication by $\xi_\lambda$ yields an isometry of $\Xi^\infty$, we have reduced proving \eqref{HurewiczToShow1} to proving that
\begin{equation}\label{HurewiczToShow2}
\asdim \ \!  f^{-1}\!\left(\left(B'(e_\Lambda, r) \cap \Lambda^2\right) D(f) \right) \leq \asdim \ \! f^{-1}(D(f)),
\end{equation}
for every $r > 0$. 

Now by properness of $d'$ the ball $B'(e_\Lambda, r)$ is finite, so both $B'(e_\Lambda, r)\cap\Lambda$ and $B'(e_\Lambda, r)\cap\Lambda^2$ are finite and we may write, using $e_\Lambda\in \Lambda\subset \Lambda^2$,
\[
B'(e_\Lambda, r)\cap\Lambda=\{\lambda_1=e_\Lambda, \lambda_2, \ldots , \lambda_N\}\subset\{\lambda_1, \ldots, \lambda_N, \lambda_{N+1}, \lambda_{N+2}, \ldots , \lambda_{N+k}\}= B'(e_\Lambda, r)\cap\Lambda^2.
\]
We also know that $D(f)$ is finite, so using $e_\Lambda\in D(f)$ we may write $D(f)=\{d_1=e_\Lambda, d_2, \ldots , d_m\}$.
It follows that
\[
\left(B'(e_\Lambda, r) \cap \Lambda^2\right) D(f) = \{\lambda_i \mid i=1, \ldots , N+k\} \cup \{ \lambda_id_j \mid i=1, \ldots , N+k, \ j=2, \ldots , m\}.
\]
Therefore 
\[
f^{-1}\!\left( \left(B'(e_\Lambda, r) \cap \Lambda^2\right) D(f) \right) = \left(\bigsqcup_{i=1}^{N}f^{-1}(\lambda_i)\right) \sqcup 
\left(\bigsqcup_{i=1}^{k}f^{-1}(\lambda_{N+i})\right) \sqcup \left( \bigsqcup_{\substack{i=1, \ldots , N+k\\ j=2, \ldots , m}} f^{-1}(\lambda_id_j) \right).
\]

Since $f|_\Xi=f_1:\Xi\to \Lambda$ is surjective, we know that $f^{-1}(\lambda_i)$ are non-empty for $i=1, \ldots , N$, though some of the other preimages in this union may be empty. For the sake of the argument, let us assume that all of the preimages mentioned in this union are non-empty (if some were empty, we could just remove them from further calculations). Let us take 
\[\xi_i\in \Xi^\infty \text{ so that } f(\xi_i)=\lambda_i, \text{ for } i=1, \ldots , N+k,   \ \text{ and}\]
\[\xi_{ij}\in \Xi^\infty \text{ so that } f(\xi_{ij})=\lambda_id_j, \text{ for  } i=1, \ldots , N+k \text{ and }j=2, \ldots , m.
\]
Define
\begin{eqnarray}\label{replace by inverses}
R \coloneqq  \max \ \! \left(\left\{d(e_\Xi, \xi_i) \mid\ \text{\footnotesize{\(i=1, \ldots , N+k\)}} \right\}\cup \left\{d(e_\Xi, \xi_{ij}) \mid \ \text{\footnotesize{\( i=1, \ldots , N+k , \ j=2, \ldots , m\)}}\right\}\right)
\end{eqnarray}
and note that, by left-invariance of the metric $d$, we get the same $R$ in \eqref{replace by inverses} if we replace each $\xi_i$ and $\xi_{ij}$ by its inverse. 

Now we claim that all $f^{-1}(\lambda_i)$ and all $f^{-1}(\lambda_id_j)$, for $i=1, \ldots , N+k$ and $j=2, \ldots , m$, are contained in $N_R(f^{-1}(D(f)))$. To see this, take any $\eta_i \in f^{-1}(\lambda_i)$, for $ i=1, \ldots , N+k$, and note that using \eqref{Drho1} and the fact that $f$ is symmetric we have
\[
f(\eta_i\xi_i^{-1})\in f(\eta_i)f(\xi_i^{-1})D(f)=f(\eta_i)f(\xi_i)^{-1}D(f)=\lambda_i\lambda_i^{-1}D(f)=D(f),
\]
so $\eta_i\xi_i^{-1}$ is contained in $f^{-1}(D(f))$.
Analogously, for any $\eta_{ij} \in f^{-1}(\lambda_id_j)$, for $ i=1, \ldots , N+k$, $j=2,\ldots , m$,  we have
\[
f(\eta_{ij}\xi_{ij}^{-1})\in f(\eta_{ij})f(\xi_{ij}^{-1})D(f)=f(\eta_{ij})f(\xi_{ij})^{-1}D(f)=\lambda_id_j(\lambda_id_j)^{-1}D(f)=D(f), 
\]
so $\eta_{ij}\xi_{ij}^{-1}$ is contained in $f^{-1}(D(f))$.
Therefore by \eqref{replace by inverses}
\[
d(\eta_i, f^{-1}(D(f)))\leq d(\eta_i, \eta_i\xi_i^{-1})\leq R, \text{ for } i=1, \ldots, N+k, \text{ and}
\]
\[
d(\eta_{ij}, f^{-1}(D(f)))\leq d(\eta_{ij}, \eta_{ij}\xi_{ij}^{-1})\leq R, \text{ for } i=1, \ldots, N+k, \ j=2, \ldots, m,
\]
which finishes the proof that all $f^{-1}(\lambda_i)$ and all $f^{-1}(\lambda_id_j)$ are contained in $N_R(f^{-1}(D(f)))$.

Thus the entire $f^{-1}\!\left( \left(B'(e_\Lambda, r) \cap \Lambda^2\right) D(f) \right)$ is contained in $N_R(f^{-1}(D(f)))$, so
\[\asdim \ \!  f^{-1}\!\left(\left(B'(e_\Lambda, r) \cap \Lambda^2\right) D(f) \right) \leq \asdim \ \!N_R( f^{-1}(D(f)))\]
and $N_R(f^{-1}(D(f)))$ has the same asymptotic dimension as $f^{-1}(D(f))$,
which establishes \eqref{HurewiczToShow2} and finishes the proof.
\end{proof}

\begin{remark}
Since asymptotic dimension is a coarse invariant, we could replace $f^{-1}(D(f))$ in the statement of Theorem \ref{Main-Hurewicz-quasimorphism} by its coarse class $[f^{-1}(D(f)) ]_c$.
\end{remark}

\end{document}